\newtheorem{theorem}{Theorem}[section]
\newtheorem{lemma}[theorem]{Lemma}
\newtheorem{remark}[theorem]{Remark}
\newtheorem{corollary}[theorem]{Corollary}
\newcommand{\Rbb}{\mathbb{R}}
\newcommand{\Nbb}{\mathbb{N}}
\newcommand{\Zbb}{\mathbb{Z}}
\newcommand{\ploi}{PL_+(I)}
\newcommand{\supp}[1]{\mathsf{Supp}(#1)}
\newcommand{\defeq}{:=}
\newcommand{\msc}[1]{\mathscr{#1}}
\newcommand{\breaks}[1]{\msc{B}_{#1}}
\newcommand{\Top}{Top}
\newcommand{\Lower}{Lower}
\newcommand{\maxDepth}{\operatorname{maxDepth}}
\newcommand{\siggp}{\mathsf{sigGrp}}
\newcommand{\orbd}{\mathsf{orbDepth}}
\newcommand{\seen}{\msc{S}}
\newcommand{\unseen}{\msc{U}}
\newcommand{\counter}{\mathsf{counter}}
\newcommand{\ones}{one-sided overlap}
\newcommand{\bado}{complex overlap}  
\newcommand{\nice}{one-bump functions with fundamental domains} 
\begin{document}

\title[Determining solubility of groups
   of homeomorphisms]
{Determining solubility for finitely generated groups
   of PL homeomorphisms}

\author[Collin Bleak]{Collin Bleak}
\address{School of Mathematics and Statistics\\
        University of St.~Andrews\\
         North Haugh St Andrews, Fife KY16 9SS, Scotland }
\email{cb211@st-andrews.ac.uk}

\author[Tara Brough]{Tara Brough}
\address{School of Mathematics and Statistics\\
        University of St.~Andrews\\
         North Haugh St Andrews, Fife KY16 9SS, Scotland}
\email{tarabrough@gmail.com}

\author[Susan Hermiller]{Susan Hermiller}
\address{Department of Mathematics\\
        University of Nebraska\\
         Lincoln NE 68588-0130, USA}
\email{hermiller@unl.edu}

\thanks{2010 {\em Mathematics Subject Classification}. 20F10; 20F16, 37C25\\
{\em Keywords}: Piecewise linear homeomorphism, Thompson's group,
soluble, membership problem}


\begin{abstract}
The set of finitely generated subgroups of
the group $\ploi$ of orientation-preserving piecewise-linear homeomorphisms 
of the unit interval includes many important 
groups, most notably R.~Thompson's group $F$.
In this paper we show that every finitely generated subgroup $G<\ploi$ is 
either soluble, or contains an embedded copy of Brin's group $B$, a finitely 
generated, non-soluble group, which verifies a conjecture of the first author 
from 2009.  In the case that $G$ is soluble, we show that the derived length 
of $G$ is bounded above by the number of breakpoints 
of any finite set of generators.  We specify a set of `computable'
subgroups of $\ploi$ (which includes R. Thompson's group $F$) and we give 
an algorithm which determines in finite time whether or not any given finite 
subset $X$ of such a computable group generates a soluble group. When the 
group is soluble, the algorithm also determines the derived length of 
$\langle X\rangle$.  Finally, we give a solution of the
membership problem for a family of finitely generated soluble
subgroups of any computable subgroup of $\ploi$.
\end{abstract}

\maketitle


\section{Introduction}\label{sec:intro}


In \cite{brinU,bpgsc,bpasc,bpnsc} a theory is built 
connecting the solubility class of a subgroup $G$ 
of the group $\ploi$ of piecewise-linear  
orientation-preserving homeomorphisms of $I=[0,1]$ 
(with finitely many breaks in slope) with 
data on how the supports of the elements of $G$ overlap 
with each other, and how these supports relate 
to the support of the whole action of $G$ on $I$.  
One result in that theory is that there is a non-soluble 
group $W$ which is not finitely generated, and which 
contains an embedded copy of every soluble subgroup of $\ploi$, 
such that any non-soluble subgroup of $\ploi$ contains an 
embedded copy of $W$ (Corollary 1.2 and Theorem 1.1, 
respectively, of \cite{bpnsc}).  However, in the finitely 
generated case, it has been believed that one could considerably 
strengthen that result.  Indeed, it is conjectured in \cite{bpnsc} 
that any finitely generated non-soluble subgroup of $\ploi$ 
contains an embedded copy of Brin's group $B$, a two-generated 
non-soluble group introduced by Brin in Section 5 of \cite{brinEA} 
as $G_1$.  In this paper we verify this conjecture.

To be more specific, we say that $G$ \emph{admits
a transition chain} if there are
two elements $g,h \in G$ with components of
support $(a,b),(c,d)$, respectively, such that
$a<c<b<d$.  If $G$ does not admit such a 
chain, then we say that $G$ is \emph{chainless}.
In a similar fashion, the group
$G$ \emph{admits a \ones} if there
are two elements $g,h \in G$ with components of 
support $(a,b), (a,c)$ or
$(a,b),(c,b)$, respectively, such that $a<c<b$.
Finally, $G$ \emph{admits a tower of infinite height}
if there is an infinite sequence
$\{g_i\}_{i \in \Nbb}$ of elements of
$G$ with components of support $A_i$, respectively,
such that $A_{i+1} \subsetneq A_i$ for all $i$.
We illustrate these three properties
in the special case of elements with
a single component of support in 
Figure~\ref{fig:threeSits}.

In~\cite[Theorem~1.1 and Lemma~1.4]{bpgsc}
(restated with further details
in Theorem~\ref{thm:towerSolveClass} 
and Lemma~\ref{lem:weak-tower}
of Section~\ref{subsec:orbitals-towers-chains} below) 
the first author shows that a soluble subgroup of
$\ploi$ must be chainless and does not
admit a tower of infinite height.  
In our main
theorem (Theorem~\ref{main})
in Section~\ref{sec:main} below,
we show that in the finitely generated
case the converse of each of these also holds.
In Lemma~\ref{orbits}
we show that for a subgroup $G \leq \ploi$
generated by a finite set $X \subset \ploi$, the
number of $G$-orbits of the set of breakpoints 
of elements of $G$ 
is bounded above by the cardinality of 
the set of breakpoints of the elements of $X$
(a point $x\in (0,1)$ is a \emph{breakpoint} of $g\in\ploi$
if $g$ changes slope at $x$);
this is applied in Theorem~\ref{main} to obtain a
bound on the derived length
in the soluble case.

\begin{center}
\begin{figure}\label{fig:threeSits}
\includegraphics[height=100pt,width=100pt]{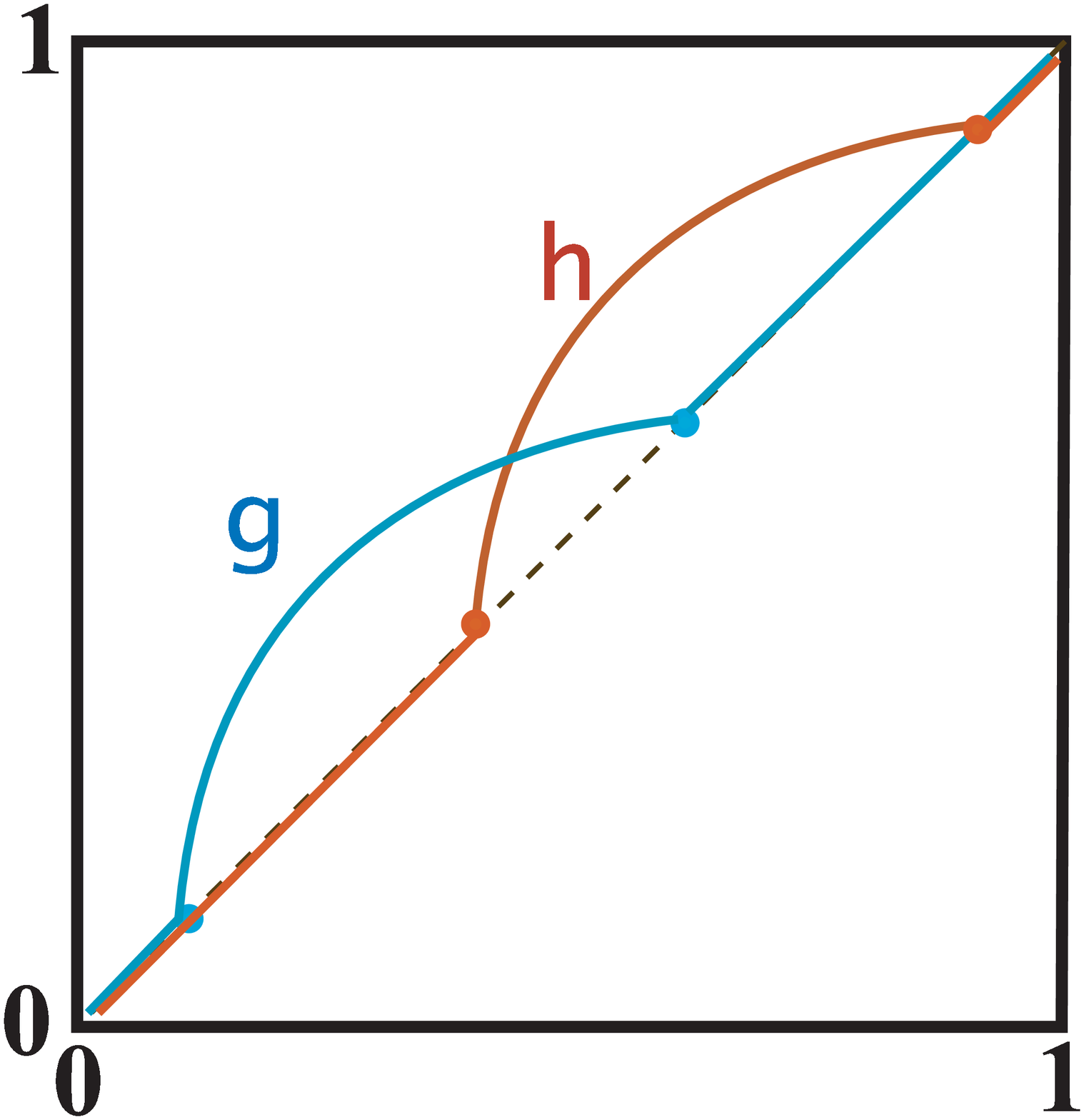}
\hspace{.25in}
\includegraphics[height=100pt,width=100pt]{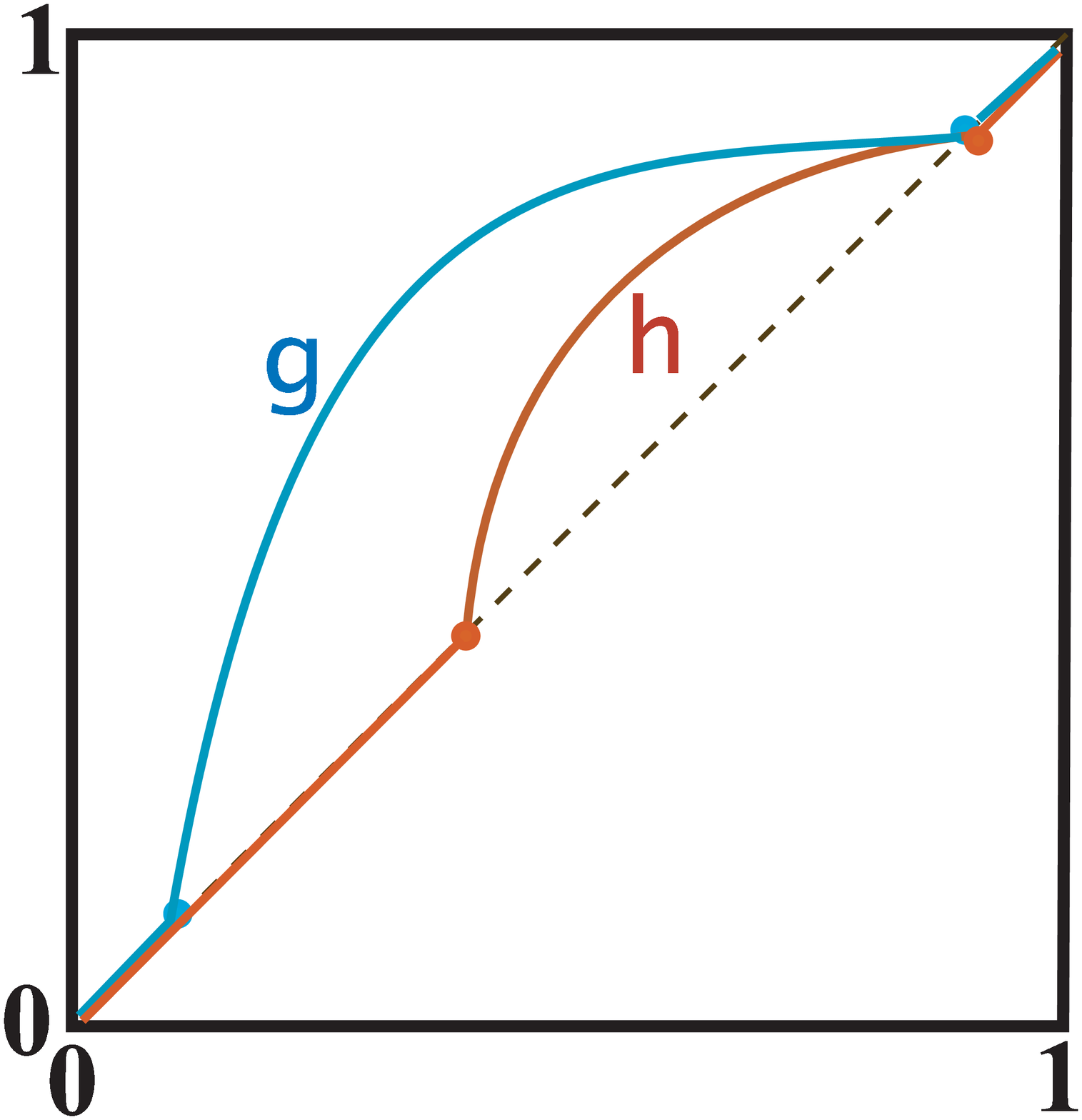}
\hspace{.25in}
\includegraphics[height=100pt,width=100pt]{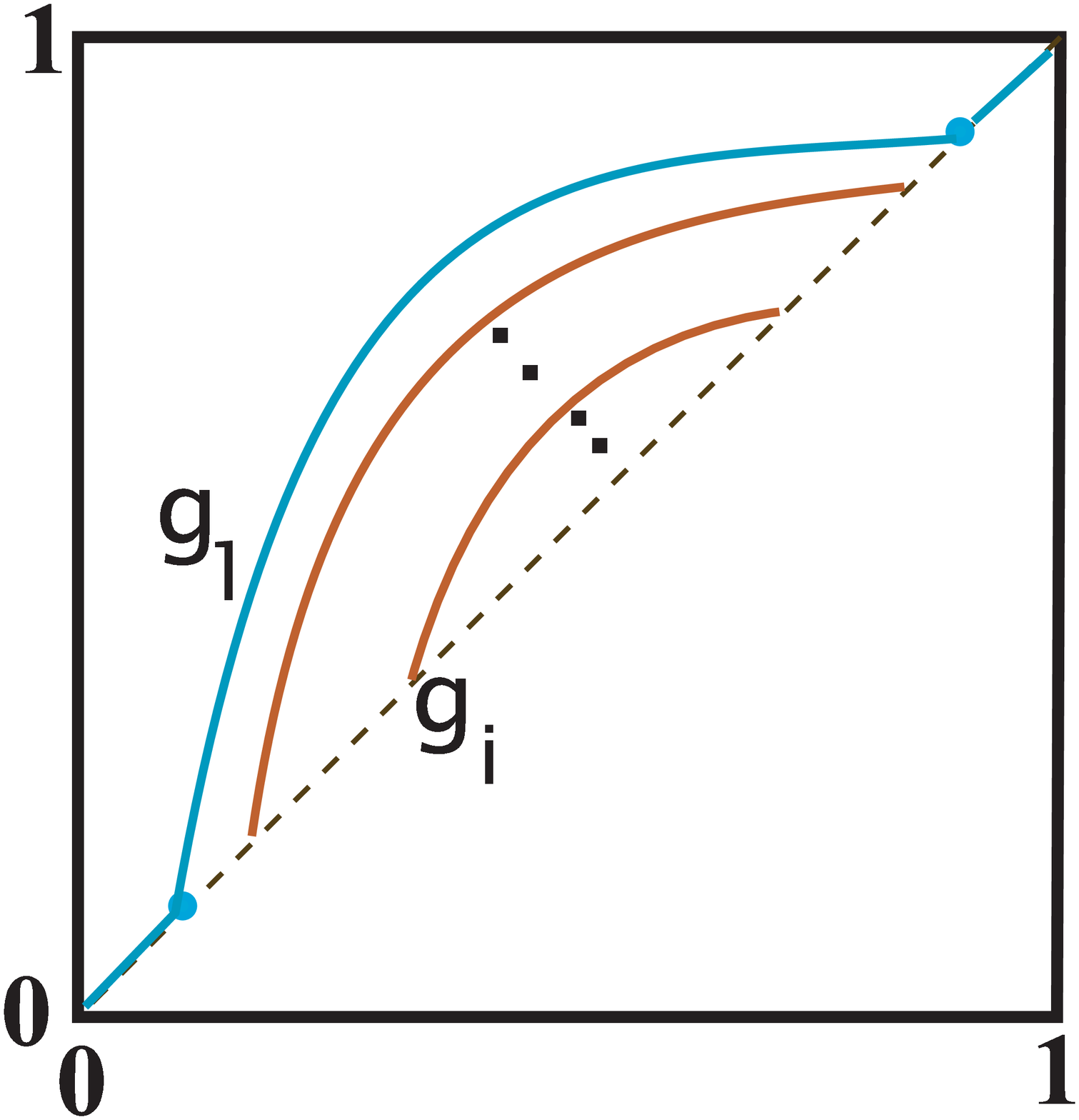}
\caption{Transition chain, \ones, and tower of infinite height}
\end{figure}
\end{center}

\vspace{-.25 in}

\smallskip

\noindent{\bf Theorem~\ref{main}.}
\emph{Let $G < \ploi$ be generated by a finite set $X$.
The following are equivalent.
\begin{enumerate}
\item $G$ is not soluble.
\item $G$ admits a transition chain.
\item $G$ admits a \ones.
\item $G$ admits a tower of infinite height.
\end{enumerate}
Moreover, 
if $G$ is soluble,
the derived length of $G$ is less than or equal to the
cardinality 
of the set $\breaks{X}$
of breakpoints of elements of $X$.
}

\smallskip

Our first application of Theorem~\ref{main}
is a verification of the conjecture discussed
in the first paragraph of this introduction.
Note that since a soluble group cannot contain a
non-soluble subgroup, a subgroup $G \le \ploi$
containing a copy of $B$ is also non-soluble.
Theorem 1.4 of \cite{bpnsc} 
(restated in Theorem~\ref{thm:tchainB} in Section~\ref{ss:B})
states that if $G$ admits a transition chain, then it admits 
an embedded copy of Brin's group $B$, and so we obtain the following
corollary 
resolving the conjecture.

\begin{corollary}\label{thm:fgnonsol-B}
Let $G$ be a finitely generated subgroup of $\ploi$.
Then $G$ is non-soluble if and only if Brin's group $B$
embeds in $G$.
\end{corollary}
 
Thus $B$ not only contains every soluble
subgroup of $\ploi$, but $B$ is also contained
in every finitely generated non-soluble subgroup
of $\ploi$.

In Section~\ref{sec:algorithm} we apply
Theorem~\ref{main} to 
develop a solution to 
the \emph{soluble subgroup recognition problem (SSRP)}.
Given a group $G$ with a finite generating set $Y$, 
the SSRP
asks whether there is an algorithm that,
upon input of a finite
set $X$ of words over $Y^{\pm 1}$,
can determine whether or not the subgroup $\langle X \rangle$
of $G$ generated by $X$ is a soluble group.
For a subgroup $G < \ploi$,
we allow the 
the finite 
list $X$ of elements input into our procedure
to have formats other than
a list of words over a finite generating set of $G$;
for example,
if $G$ is R.~Thompson's group $F$, in
which all breakpoints are 2-adic rational
numbers and all slopes are powers of 2, we may
input an element as an integer list of numerators and
denominators of the breakpoints and slopes
of the homeomorphism.
At the same time, we restrict our consideration to
\emph{computable} subgroups of $\ploi$,
in which several basic operations
can be implemented (see p.~\pageref{def:comp} for the
full definition).
Examples of requirements for a group 
$C\leq \ploi$ to be computable are that
the breakpoints and endpoints of 
components of support of elements can be computed
and compared,
and that given a finite 
collection of slopes of affine components of 
graphs of functions in $C$, a computer
can determine whether
the multiplicative subgroup of $\Rbb_+^*$  
generated by these slopes is discrete (has a lower bound on the 
distance from the identity 1 for all non-identity elements).
In particular, we note that 
the word problem is solvable in finitely generated
computable subgroups of $\ploi$, and that
R.~Thompson's group $F$ is a computable subgroup.

\smallskip

\noindent{\bf Theorem~\ref{thm:algorithm}.}
\emph{Let $C$ be a 
computable subgroup of $\ploi$.
The soluble subgroup recognition problem 
is solvable for $C$; that is, 
there is an algorithm which, upon input of
a finite subset $X$ of $C$, can determine
whether or not the
subgroup $\langle X \rangle$ generated by $X$ is 
a soluble group.  
Moreover, in the case
that the group $\langle X \rangle$ is soluble, the algorithm also 
determines its derived length.}

\smallskip

The proof of Theorem~\ref{thm:algorithm}
uses the concept of controllers for
chainless groups developed by the first author in~\cite{bpasc};
we discuss background on this topic
in Section~\ref{subsec:controllers-chainless}.
The main step of the procedure takes a finite collection 
$\{g_1,g_2,\ldots, g_k\}$ 
of elements generating a chainless group $G<\ploi$, 
with all of these 
generating elements having $(a,b)$ as a common component 
of support, and generates a single (``controlling'') 
element $c$ with $(a,b)$ as a component of support, and 
multiple other elements $\{h_1, h_2, \ldots, h_k\}$ with 
supports whose closure is contained in $(a,b)$, such that 
$G = \langle h_1,h_2, \ldots, h_k, c\rangle$.  
The set of elements $\{h_1,h_2,\ldots,h_k\}$ might 
then share components of support again inside $(a,b)$, 
when one continues to induct with the procedure on these 
new components of common support.  This process thus 
creates descending towers as in the 
graph on the right in Figure~\ref{fig:threeSits}, 
possibly descending forever; Theorem~\ref{main}
is used to determine when sufficient
information has been found to terminate this
procedure.

The proof of Theorem~\ref{thm:algorithm} also
shows that the {\em membership decision problem (MDP)} 
is solvable for some finitely generated soluble subgroups of 
computable subgroups of $\ploi$.
Given a group $G$ with a finite generating set $Y$
and a subgroup $H$ of $G$, 
the MDP for the subgroup $H$
asks whether there is an algorithm that,
upon input of
any word $w$ over $Y^{\pm 1}$,
can determine whether or not $w$ lies in the
subgroup $H$.
As with our SSRP algorithm, we allow inputs to our
MDP procedure to take forms other than
words over a finite generating set for $G$.
A set $X \subset \ploi$ is a
\emph{set of \nice}\label{def:nice} if $X$
satisfies the following properties:
\begin{itemize}
\item[(Z0)] Each element $h$ of $X$ admits exactly
one component of support, which we denote by $A_h$.
(That is, the graph of $h$ has ``one bump''.)
\item[(Z1)] No pair of elements of $X$ forms a transition
chain or a \ones.
\item[(Z2)] If $h,h' \in X$ and $h \neq h'$, then
$A_h \neq A_{h'}$.
\item[(Z3)] For each $h \in X$, there is an $r_h \in A_h$
such that for every $h' \in X$ with $A_{h'} \subsetneq A_h$, 
the containment $A_{h'} \subseteq (r_h,r_h \cdot h)$
also holds.
(That is, $(r_h,r_h \cdot h)$ is a fundamental
domain for the conjugation action 
by powers of $h$.)
\end{itemize}
In Lemma~\ref{lem:towertree} we show that
a group $H=\langle X \rangle$
generated by a finite set $X$ of \nice\ is a soluble group,
contained in the smallest class of groups
that includes the trivial group and is closed
under wreath products with $\Zbb$ and
finite direct sums.

\smallskip

\noindent{\bf Corollary~\ref{thm:membership}.}
\emph{Let $C$ be a 
computable subgroup of $\ploi$.
Let $H$ be a subgroup of $C$
generated by a finite set of \nice.
Then the membership decision problem 
is solvable for $H$; that is,
there is an algorithm which, upon input of
an element $w$ of $C$, can determine
whether $w \in H$.}

\smallskip

The results of this paper can be seen
as part of a larger family of 
results that researchers have
obtained by studying subgroups of 
$\ploi$ (or of R.~Thompson's group $F$) through 
a close attention to the dynamical properties of 
the action of the subgroup on the unit interval.  
Other results in this family include:

\begin{itemize}
\item Any subgroup of $\ploi$ satisfying the 
  Ubiquity Condition of Brin from~\cite{brinU}
  (restated in Theorem~\ref{thm:brinU} in 
   Section~\ref{subsec:controllers-chainless}),
   has an embedded copy of R.~Thompson's group $F$.
   (An example of this condition is given
  by the subgroup generated by the two elements
  in the \ones\ of 
  the centre graph of Figure~\ref{fig:threeSits}.)
\item  Any subgroup of $\ploi$ containing
  elements that yield a ``tower of height $i$''
  (depicted in rightmost graph of Figure~\ref{fig:threeSits}
  and discussed in more detail in 
  Section~\ref{subsec:orbitals-towers-chains})
  has derived length at least $i$~\cite{bpgsc,bpasc}
  (restated in Theorem~\ref{thm:towerSolveClass} below). 
\item The group $\ploi$ has no embedded non-abelian 
  free groups \cite{brinSquirePLR}.
\item Any non-abelian subgroup of R.~Thompson's group $F$ 
  contains an embedded copy of 
  $\Zbb\wr\Zbb$~\cite{gubaSapirDiagramGroups}.
\item Any subgroup $G$ of $\ploi$ is soluble if and only if 
  $G$ embeds as a subgroup of 
  $W_n =(\ldots((\Zbb\wr\Zbb)\wr \Zbb)\wr \ldots \Zbb)\wr \Zbb$ 
  for some $n\in\Nbb$ where $W_n$ has $n$ copies of $\Zbb$ 
  appearing in the iterated wreath product \cite{bpasc}.
\item Any non-soluble subgroup of $\ploi$ contains an embedded 
  copy of $W=\bigoplus_{n\in\Nbb} W_n$~\cite{bpnsc}.
\end{itemize}

Before proceeding to the proofs of
Theorems~\ref{main} and~\ref{thm:algorithm}
in Sections~\ref{sec:main} and~\ref{sec:algorithm},
we begin in Section~\ref{sec:ploi} with background,
notation, and definitions for the group $\ploi$.


\section{The group $\ploi$}\label{sec:ploi}


Here we give the basic definitions we require for 
discussing the group $\ploi$ and its elements.  
The results discussed in this section are 
first introduced in either \cite{brinU}, or 
later in \cite{bpgsc,bpasc,bpnsc}.


\subsection{Right actions, supports, slopes, and breaks}\label{subsec:actions-supports-breaks}


Throughout this paper we will use right action notation.  
In particular, if $x\in[0,1]$ and $g\in\ploi$, 
we write $xg$ for the image of $x$ under the map $g$.  
As is somewhat traditional (but not universal) 
for right actions, for elements $g,h\in\ploi$, 
and $S\subset[0,1]$ we set
\begin{align*}
Sg\defeq & \left\{sg\mid s\in S\right\}\\
\supp{g}\defeq &\left\{x\in[0,1]\mid xg\neq x\right\}\\
g^h\defeq & \,h^{-1}gh\\
[g,h]\defeq & \,g^{-1}h^{-1}gh
\end{align*} 
for the 
\emph{image of $S$ under the action of $g$},
the \emph{support of $g$},  
the \emph{conjugate of $g$ by $h$}, and the 
\emph{commutator of $g$ and $h$}, respectively.  
With this notation in place we have a 
standard lemma from permutation group theory, 
restated for elements of the group $\ploi$.

\begin{lemma}\label{lem:supports}
Let $g,h\in \ploi$.  Then
$\supp{g^h}=\supp{g}h.$
\end{lemma}

For a subgroup $G \le \ploi$, the associated 
\emph{slope group of $G$}, denoted $\Pi_G$, 
is the multiplicative subgroup of the positive real numbers 
generated by the slopes of affine components
of elements of $G$.

For $g\in\ploi$, $x\in(0,1)$, we say that 
$x$ is a \emph{breakpoint} 
of $g$ whenever $xg'$ does not exist 
(here, we are using $g'$ to denote the derivative of $g$).  
For a set $X\subset \ploi$ we denote by 
$\breaks{X}$ the set of breakpoints 
of the elements in $X$.  That is
$$
\breaks{X}\defeq\left\{x\in(0,1)\mid
   \exists g\in X, xg' \textrm{ does not exist}\right\}.
$$
We will slightly abuse this notation for a single element 
$g\in\ploi$, by setting 
$$
\breaks{g}\defeq\breaks{\left\{g\right\}}=
  \left\{x\in(0,1)\mid xg'\textrm{ does not exist}\right\}.
$$


 \subsection{Orbitals, towers, and transition chains}\label{subsec:orbitals-towers-chains}


We extend the definition of support to groups, 
so for a group $G\leq \ploi$, we set 
$$
\supp{G}\defeq\bigcup_{g\in G}\supp{g},
$$
noting that if $x\in\supp{G}$ then there is 
some $g\in G$ so that $xg\neq x$.  
As $\supp{G}$ is an open set, it can be written as a 
disjoint union of open intervals, each one of which 
is called an \emph{orbital of $G$}.  
That is, an orbital of $G$ is a connected component of the 
support of the action of $G$ on $[0,1]$.  
Note that if $A\neq B$ are two orbitals of $G$, 
then there is no element of $G$ which can move a point 
in $A$ to a point in $B$, which partly motivates our language
(as it means that each $G$-orbit is contained in some orbital of $G$).  
For an element $g\in \ploi$, a subset $A=(a,b)\subset[0,1]$ 
is an orbital of $\langle g\rangle$ if and only if
$A$ is a component of $\supp{g}$; in this case
we say $A$ is an \emph{orbital of $g$}.

A \emph{signed orbital} is a pair $((a,b),g)$
consisting of an open interval $(a,b)\subset [0,1]$ 
and an element $g\in\ploi$ such that $(a,b)$ is an orbital of $g$;
here $(a,b)$ is the orbital 
and $g$ is the \emph{signature}.  

A \emph{tower} is a set $\msc{T}$ 
of signed orbitals satisfying the property that
whenever  $((a,b),g)$ and $((c,d),h)$ are in $\msc{T}$, then
\begin{enumerate}
  \item   $(a,b)\subseteq (c,d)$ or $(c,d)\subseteq(a,b)$, and
  \item $(a,b)=(c,d)$ implies $g=h$.
\end{enumerate}
For a tower $\msc{T}$, the cardinality $|\msc{T}|$ is called 
the \emph{height of $\msc{T}$}.  
Given a group $G\leq \ploi$ and a tower $\msc{T}$, 
we say that $\msc{T}$ is \emph{associated} with $G$, or 
that $G$ \emph{admits} the tower $\msc{T}$, if all the signatures 
of the signed orbitals in $\msc{T}$ are elements of $G$. 
If $G \leq\ploi$ and $A$ is an open subinterval of the
unit interval $I$, the \emph{orbital depth} of $A$ in $G$
is the supremum of the heights of finite towers associated
with $G$ in which the smallest orbital has the form
$(A,g)$ for some $g \in G$.
If $G\leq\ploi$, we set the \emph{depth of $G$} to be the 
supremum of the heights of the towers in the full set of 
towers associated with $G$. 

The graph on the right in Figure~\ref{fig:threeSits} depicts a tower 
of infinite height.  
The ordering of indices, which appears 
inverted, favours the perspective of ``depth'' over ``height.''  
Reasons for this will become apparent in our construction 
proving Theorem~\ref{thm:algorithm}.

The main theorem of~\cite{bpgsc} is the following.

\begin{theorem}\label{thm:towerSolveClass}\cite[Theorem~1.1]{bpgsc}
Let $G\leq \ploi$ and $n\in\Nbb$.  
The group $G$ is soluble with derived length $n$ if and only if 
the depth of $G$ is $n$.
\end{theorem}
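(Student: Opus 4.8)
The plan is to prove the two inequalities $\operatorname{depth}(G)\le \ell(G)$ and $\ell(G)\le\operatorname{depth}(G)$, where $\ell(G)$ is the derived length of $G$, under the conventions $\ell(G)=\infty$ when $G$ is non-soluble and $\operatorname{depth}(G)=\infty$ when $G$ admits towers of every finite height. A finite value of either quantity then forces the other to agree, so solubility holds exactly when $\operatorname{depth}(G)$ is finite, and in that case the two numbers coincide. The engine behind both inequalities is the \emph{slope-germ homomorphism}: for an orbital $U=(a,b)$ of $G$ every $g\in G$ fixes $a$, so $\lambda_a\colon G\to(\Rbb,+)$, $g\mapsto\log\big(g'(a^+)\big)$, is a well-defined homomorphism. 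Its target is abelian, so $[G,G]\subseteq\ker\lambda_a$; and since elements of $\ploi$ are piecewise linear with finitely many breaks, $\ker\lambda_a$ consists precisely of those $g$ fixing an entire right-neighbourhood of $a$. The same holds at $b$. Hence every element of $[G,G]$ is the identity on a neighbourhood of each endpoint of each $G$-orbital.

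For $\operatorname{depth}(G)\le\ell(G)$ it suffices to show that a tower $\msc{T}=\{(O_1,g_1),\dots,(O_n,g_n)\}$ with $O_1\supsetneq\cdots\supsetneq O_n$ forces $G^{(n-1)}\neq 1$. I would climb the tower from the bottom, producing nontrivial elements $w_n,\dots,w_1$ with $w_i\in G^{(n-i)}$ and $\supp{w_i}\subseteq O_i$, where at each step I commutate an element with a conjugate of itself in order to descend exactly one level of the derived series. Set $w_n:=g_n$. Given $1\neq w_{i+1}\in G^{(n-i-1)}$ supported in $O_{i+1}\subseteq O_i$, the signature $g_i$ is fixed-point-free on its orbital $O_i$, so a suitable power $g_i^{M}$ carries $\supp{w_{i+1}}$ to a \emph{properly overlapping} (neither equal nor disjoint) translate. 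Because derived subgroups are normal, both $w_{i+1}$ and $w_{i+1}^{\,g_i^{M}}$ lie in $G^{(n-i-1)}$, so $w_i:=[\,w_{i+1},\,w_{i+1}^{\,g_i^{M}}\,]\in G^{(n-i)}$ is supported in $O_i$, and the proper overlap makes it nontrivial. Iterating to the top yields $1\neq w_1\in G^{(n-1)}$, so $\ell(G)\ge n$.

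For $\ell(G)\le\operatorname{depth}(G)$ I would establish the key reduction $\operatorname{depth}([G,G])\le\operatorname{depth}(G)-1$ and iterate it to obtain $\operatorname{depth}(G^{(k)})\le\operatorname{depth}(G)-k$; then $\operatorname{depth}(G)=n<\infty$ gives $\operatorname{depth}(G^{(n)})\le 0$, i.e.\ $G^{(n)}=1$. For the reduction, take any tower $\msc{T}'$ of $[G,G]$ with largest orbital $O_1=(p,q)$ and signature $c\in[G,G]$, and let $U$ be the $G$-orbital containing $O_1$. By the germ computation above, $c$ fixes neighbourhoods of both endpoints of $U$, whence $\overline{O_1}\subseteq U$, so $O_1$ is compactly contained in $U$. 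A dynamical spanning argument then produces a single $g_0\in G$ with an orbital $O_0$ satisfying $\overline{O_1}\subseteq O_0$: every point of the compact connected set $\overline{O_1}$ lies in the support of some element of $G$, finitely many of the resulting element-orbitals cover $\overline{O_1}$, and a suitable product of those elements has an orbital containing $\overline{O_1}$. Prepending $(O_0,g_0)$ to $\msc{T}'$ gives a tower of $G$ of height $|\msc{T}'|+1$, which proves the reduction.

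Combining the two inequalities yields $\operatorname{depth}(G)=\ell(G)$ whenever either is finite, and forces both to be infinite otherwise, which is exactly the stated equivalence. The main obstacle is the dynamical spanning step in the second inequality: merging finitely many overlapping element-orbitals into a single orbital of one group element is precisely where naive products may introduce interior fixed points, and where the distinction between nested, one-sided, and transition-chain overlaps (Figure~\ref{fig:threeSits}) must be controlled; the resolution uses the fixed-point-free behaviour of each signature on its own orbital together with finiteness of breakpoints. The secondary technical point, in the first inequality, is arranging at each climbing step a translate of $\supp{w_{i+1}}$ that properly overlaps the original and maintaining $\overline{\supp{w_i}}\subseteq O_i$, so that the overlapping commutator is genuinely nontrivial; this too rests on the piecewise-linear slope-germ structure developed above.
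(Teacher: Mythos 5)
The paper does not actually prove this statement: it is imported as Theorem~1.1 of \cite{bpgsc}, so your proposal can only be compared with the argument given there. Your second inequality, $\ell(G)\le\operatorname{depth}(G)$ via the reduction $\operatorname{depth}([G,G])\le\operatorname{depth}(G)-1$, is sound in outline and close in spirit to the source: the slope-germ homomorphism correctly forces every orbital of an element of $[G,G]$ to be compactly contained in the ambient $G$-orbital, and the spanning step (a single element of $G$ with one orbital swallowing a prescribed compact subset of a $G$-orbital) is a genuine lemma requiring exactly the care you flag, but it is true and provable. The problem is the first inequality, $\operatorname{depth}(G)\le\ell(G)$, which is the substantive direction and which your reduction does not yield.

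The step that fails is $w_i:=[\,w_{i+1},\,w_{i+1}^{\,g_i^{M}}\,]$. The ``properly overlapping translate'' you need may not exist for \emph{any} $M$: take $O_i=(0,1)$ with $g_i$ acting as translation by $1$ in suitable logarithmic coordinates and $\supp{w_{i+1}}$ an interval of width less than $1$ in those coordinates. This is precisely the fundamental-domain configuration realising $\Zbb\wr\Zbb$, i.e.\ the generic case rather than a pathology. Then $\supp{w_{i+1}}\cdot g_i^{M}$ is disjoint from $\supp{w_{i+1}}$ for every $M\neq 0$ and equal to it for $M=0$; all conjugates of $w_{i+1}$ by powers of $g_i$ pairwise commute, so every $w_i$ you form is the identity and the climb collapses at its first step, even though $G^{(n-i)}$ is in fact nontrivial. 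Replacing $w_{i+1}$ by products of its $g_i$-conjugates cannot help, since all such elements lie in a single abelian subgroup. The missing idea is that the descent of the derived series is witnessed at the level of \emph{groups}, not single elements: choosing $M$ so that the conjugates $K^{g_i^{kM}}$ of the group $K$ built at level $i+1$ have pairwise disjoint supports gives $\langle K,g_i^{M}\rangle\cong K\wr\Zbb$, and the derived subgroup of $K\wr\Zbb$ surjects onto $K$ (via base-group elements such as $(k^{-1},k,1,\dots)$), whence $\ell(K\wr\Zbb)=\ell(K)+1$; producing a nontrivial element of $G^{(n-i)}$ requires commutating two \emph{different} non-commuting elements of $G^{(n-i-1)}$, not an element with its own conjugate. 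A secondary but real defect in the same direction is the base case $w_n:=g_n$: the signature $g_n$ may have orbitals far outside $O_n$, so your inductive control $\supp{w_i}\subseteq O_i$ already fails there and must be repaired by passing to one-bump factors or by further commutator tricks.
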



For $n\geq 2$, a \emph{transition chain of length $n$}
is a set
$$
\msc{C}=\left\{((a_i,b_i),g_i)\mid i\in\Nbb, 1\leq i\leq n\right\}
$$ 
of signed orbitals satisfying the property that
$$
a_1<a_2<b_1<a_3<b_2<a_4<b_3<\cdots<b_n.
$$ 
  If $G\leq \ploi$ and for all signatures $g$ of signed orbitals 
  in $\msc{C}$, we have $g\in G$, then we say $\msc{C}$ is 
  \emph{associated} with $G$, and that $G$ \emph{admits} a transition chain of 
  length $n$.  
  Note that if $G$ admits a transition chain of 
  length $n$ for some $n \ge 2$, then $G$ admits transition chains 
  of length $m$ for all $m\in \{2,\ldots,n\}$.  
We say $G$ is \emph{chainless} if $G$ admits no transition chains.
(We note that
other papers allow $n=1$ in the definition of a transition chain;
in this paper we require $n \ge 2$ in
the definition above in order to
streamline the phrase ``admits a transition chain''
without having to include ``of length 2''.)


 Already in \cite{bpgsc} a rudimentary connection 
 between a group ${H\leq \ploi}$ admitting transition chains 
 and the depth of the group $H$ is observed.
 
\begin{lemma}[Lemma~1.4 of~\cite{bpgsc}]\label{lem:weak-tower}
  If $H$ is a subgroup of $\ploi$ and $H$ admits transition 
   chains, then $H$ admits infinite towers.
\end{lemma}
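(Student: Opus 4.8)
The plan is to extract from the hypothesis a transition chain of the shortest possible length and then build an explicit infinite tower out of conjugates of one signature by powers of the other. Since $H$ admits transition chains it admits one of length $2$, so there are signed orbitals $((a,b),g)$ and $((c,d),h)$ with $g,h\in H$ and $a<c<b<d$. All of the towering will take place to the right of $b$: the point $c$ sits strictly inside the orbital $(a,b)$ of $g$, while $d$ sits beyond its right endpoint.

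First I would normalise the direction of $g$. On its orbital $(a,b)$ the map $g$ has no fixed point, so $g$ lies strictly on one side of the diagonal there; replacing $g$ by $g^{-1}$ (which has the same orbital) if necessary, I may assume $xg>x$ for all $x\in(a,b)$. Because $g$ fixes $a$ and $b$ and is orientation-preserving, it carries $(a,b)$ onto itself, so the forward orbit of $c$ is a strictly increasing sequence with $cg^n\in(a,b)$ for all $n\in\Nbb$; and since $d\ge b$ lies outside $(a,b)$, every power of $g$ fixes $d$, that is, $dg^n=d$.

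Next I would read off the orbitals of the conjugates $h^{g^n}\in H$ using Lemma~\ref{lem:supports}. We have $\supp{h^{g^n}}=\supp{h}\,g^n$, and applying the homeomorphism $g^n$ to the orbital $(c,d)$ of $h$ yields the single interval $(cg^n,dg^n)=(cg^n,d)$, which is therefore an orbital of $h^{g^n}$. As $n$ increases the left endpoints $cg^n$ strictly increase while remaining below $b<d$, so the intervals $(cg^n,d)$ are strictly nested and decreasing. Hence $\msc{T}\defeq\{\,((cg^n,d),h^{g^n})\mid n\in\Nbb\,\}$ is a set of signed orbitals whose orbitals are pairwise comparable under inclusion and pairwise distinct; both tower axioms therefore hold (the second vacuously). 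Since $|\msc{T}|$ is infinite and every signature $h^{g^n}$ lies in $H$, this exhibits an infinite tower associated with $H$.

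The construction itself is short; the care required is concentrated in two standard dynamical facts about $\ploi$ that underpin the strict nesting, and these are the only places I would slow down. The first is that an element of $\ploi$ is strictly monotone relative to the identity on each of its orbitals, which both legitimises the direction normalisation of $g$ and forces the orbit $\{cg^n\}$ to be strictly (not merely weakly) increasing. The second is that a point at or beyond an endpoint of an orbital of $g$ is fixed by every power of $g$, which pins the right endpoints of the nested intervals at the common value $d$. Without these one would obtain only nested closures rather than a genuine strictly descending tower, so establishing them cleanly is the main point to get right.
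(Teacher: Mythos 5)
There is a genuine gap, and it sits exactly at the point you flagged as needing care. You assert that ``since $d\ge b$ lies outside $(a,b)$, every power of $g$ fixes $d$.'' But the signed orbital $((a,b),g)$ only records that $(a,b)$ is \emph{one} component of $\supp{g}$; the element $g$ may have further orbitals to the right of $b$, and $d$ may lie in one of them. If $d$ lies in an orbital $(e,f)$ of $g$ with $b\le e<d<f$ on which $g$ moves points in the same direction as on $(a,b)$ (replacing $g$ by $g^{-1}$ flips the direction on \emph{all} orbitals simultaneously, so you cannot decouple the two), then both endpoints of $(c,d)g^{n}=(cg^{n},dg^{n})$ move strictly to the right as $n$ grows. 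The resulting intervals translate rather than nest, so no two of them are comparable under inclusion and $\msc{T}$ is not a tower. The same obstruction can recur if you try to conjugate $g$ by powers of $h$ instead. The conclusion is still true, but the proof needs an extra inductive step: since elements of $\ploi$ have finitely many breakpoints and hence finitely many orbitals, one marches outward through the orbitals of $g$ and $h$ (each failure produces a new transition chain whose outer orbital lies strictly further right) until an outer endpoint escapes the support of the other element, at which point your nesting argument applies verbatim. This is precisely the manoeuvre the paper itself performs in the proof of $(2)\Rightarrow(3)$ of Theorem~\ref{main} (``we extend this endpoint-support argument to the left and the right until we run out of orbitals of $f$ or of $g$''), which is a good indication that the authors regard the multi-orbital case as a real issue rather than a formality.

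For context: the paper does not reprove this lemma, quoting it from \cite{bpgsc}, so there is no in-text argument to compare against; but the construction you propose (conjugating one signature by powers of the other and reading off orbitals via Lemma~\ref{lem:supports}) is the standard engine behind the result. With the finitely-many-orbitals reduction added, your argument would be complete.
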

 
 In particular, such groups have infinite depth 
 (they are \emph{deep}), and so by Theorem~\ref{thm:towerSolveClass}
 they are not soluble.
  
 On the other hand, chainless groups also have very special 
 properties relating to their towers, and also to how their 
 element orbitals can intersect each other.  
 Before stating these results, we give a further refinement of 
 our definition of towers.
 
 A tower $\msc{T}$ is \emph{exemplary} if whenever 
 $(A,g), (B,h)\in \msc{T}$ with $A\subsetneq B=(a,b)$ then
\begin{enumerate}
  \item the orbitals of $g$ are disjoint from the ends of 
    the orbital $B$, and
  \item no orbital of $g$ in $B$ shares an end with $B$.
\end{enumerate}  
Another way to put this is that there is an $\epsilon>0$ 
so that for any orbital $C$ of $g$ we have $C\cap B\neq\emptyset$ 
implies $C\subset (a+\epsilon,b-\epsilon)$.

We can now express a useful lemma describing element support 
overlaps and towers in chainless groups.  This lemma represents 
properties (2) and (3) of Lemma 2.7 in the paper~\cite{bpnsc}.

\begin{lemma}\label{lem:chainlessProps}\cite[Lemma~2.7]{bpnsc}
Suppose that $G\leq\ploi$ is a chainless group.
\begin{enumerate}
  \item If $\msc{T}$ is a tower associated with $G$, 
    then $\msc{T}$ is exemplary.
  \item If $f,g\in G$, $A$ is an orbital of $f$, 
    $B$ is an orbital of $g$, and $A\cap B\neq \emptyset$, 
    then exactly one of the following three statements holds:
  \begin{enumerate}
  \item $A=B$ and $A$ is an orbital of $\langle f,g\rangle$,
  \item $\overline{A}\subset B$, $A\cap Ag = \emptyset$, 
    and $B$ is an orbital of $fg$, $gf$, and of 
    $\langle f,g\rangle$, or
  \item $\overline{B}\subset A$, $B\cap Bf = \emptyset$, 
    and $A$ is an orbital of $fg$, $gf$, and of 
    $\langle f,g\rangle$.
\end{enumerate}
\end{enumerate}
\end{lemma}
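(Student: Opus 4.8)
The plan is to derive part~(1) from part~(2) and to concentrate the real work on part~(2). For part~(1), let $\msc{T}$ be a tower associated with $G$ and suppose $(A,g),(B,h)\in\msc{T}$ with $A\subsetneq B=(a,b)$. I would apply part~(2) to each orbital $C$ of the element $g$ that meets $B$, paired with the orbital $B$ of $h$. The alternative $C=B$ would make $B$ an orbital of $g$ distinct from and containing the orbital $A$ of $g$, and the alternative $\overline B\subset C$ would make $C$ an orbital of $g$ strictly containing $A$; both contradict the disjointness of distinct orbitals of the single element $g$. Hence part~(2) forces $\overline C\subset B$ for every such $C$. Since $g$ has only finitely many breakpoints it has only finitely many orbitals, so finitely many of them meet $B$, and taking $\epsilon$ to be the minimal distance from their (finitely many) closures to $\{a,b\}$ yields the uniform separation in the restated form of ``exemplary.'' This gives part~(1).

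For part~(2), I would first dispose of the gross position of $A=(a,b)$ and $B=(c,d)$. Since $A\cap B\neq\emptyset$ and the signatures $f,g$ lie in $G$, chainlessness forbids the two strictly crossing configurations $a<c<b<d$ and $c<a<d<b$. What remains is $A=B$, a containment, or a containment sharing exactly one endpoint. In the case $A=B$ the two endpoints $a,b$ are common fixed points of $f$ and $g$, hence fixed by all of $\langle f,g\rangle$; since $A$ lies in $\supp{\langle f,g\rangle}$ and is a maximal open interval avoiding these fixed points, $A$ is an orbital of $\langle f,g\rangle$, giving~(a).

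The crux --- and the step I expect to be the main obstacle --- is excluding the shared-endpoint (i.e.\ \ones) configurations inside a chainless group, so that every containment is a containment of closures. Consider the representative case $c=a$ with $A=(a,b)\subsetneq B=(a,d)$, and arrange by replacing $g$ with $g^{-1}$ if needed that $g$ pushes right on $B$. The shared point $a$ is fixed by both $f$ and $g$, hence by $\langle f,g\rangle$; because both maps are affine in a right-neighbourhood of $a$, there they are scalings about the common point $a$ and so commute, whence $[f,g]$ is the identity near $a$. It is not globally trivial, for if $f$ and $g$ commuted then $f$ and $f^{g}$ would share the orbital $A$, forcing $bg=b$ and contradicting $b\in B$. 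Thus $[f,g]$ is a nontrivial element of $G$ whose leftmost orbital has left endpoint strictly greater than $a$; conjugating it by powers of $g$ slides this orbital monotonically across $B$, and I would show that some translate overlaps either $A$ or a neighbouring translate in strictly crossing position, producing a transition chain and the desired contradiction. The delicate point here is that the action of $g$ and of all these maps is self-similar near $a$, so the translate must be chosen with care; this is where chainlessness applied to the conjugates $f^{g^{n}}$ (rather than to $f,g$ alone) does the real work.

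Finally, in the genuine containment case $\overline A\subset B$ I would extract the product statements by dynamics on $B$. With $g$ pushing right and fixing $\{c,d\}$, the same trichotomy applied to every orbital of $f$ meeting $B$ (using the endpoint exclusion just established) shows each such orbital has closure in $B$, so $f$ fixes $\{c,d\}$ and maps $B$ to itself. Chainlessness applied to $f$ and $f^{g}$ gives $A\cap Ag=\emptyset$, since otherwise $(a,b)$ and $(ag,bg)$ would cross. To see $B$ is an orbital of $fg$, note $f$ is the identity near $c$ and $d$, so $fg$ pushes right there and fixes $\{c,d\}$; ruling out interior fixed points is again where chainlessness is essential, as it prevents the orbitals of $f$ in $B$ from conspiring with the rightward push of $g$ to create a fixed point --- formally, an interior fixed point of $fg$ would force a strictly crossing pair among the conjugates $f^{g^{n}}$. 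The identical argument with roles adjusted handles $gf$ and $\langle f,g\rangle$, and the left--right and $f$--$g$ symmetric version yields case~(c).
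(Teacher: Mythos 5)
First, a point of comparison: the paper does not prove this lemma itself --- it is imported verbatim from \cite[Lemma~2.7]{bpnsc} --- so there is no in-paper proof to measure your argument against. Judged on its own terms, your architecture is sound: deducing (1) from (2) via the finitely many orbitals of a signature meeting $B$, the $A=B$ case via common fixed endpoints, the reduction of (2) to excluding shared-endpoint configurations, and the genuine-containment case (getting $A\cap Ag=\emptyset$ from the crossing of $A$ with $Ag$, and excluding interior fixed points of $fg$ by producing a crossing of an orbital of $f$ with its image under $g^{-1}$) are all correct in outline, though the last two are only gestured at.

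The genuine gap is at the step you yourself flag as the crux: excluding the \ones. You correctly form $v=[f,g]$, show it is the identity near the shared endpoint $a$ and is nontrivial, but the proposed completion --- slide the \emph{leftmost} orbital $(p,q)$ of $v$ across $B$ by powers of $g$ and find a crossing with $A$ or with a neighbouring translate --- is not carried out and can fail as stated. If $pg\ge q$ the translates $(pg^n,qg^n)$ are pairwise disjoint; and since every translate has left endpoint greater than $a$, the only way one of them can cross $A=(a,b)$ is to contain the point $b$, which nothing in your setup forces ($b$ may fall in a gap between consecutive translates). The observation that closes the argument, and which your sketch misses, is that $v$ \emph{moves the interior endpoint} $b$: since $b$ is fixed by $f$ while $bg^{-1}$ lies in the orbital $(a,b)$ of $f$, one computes $bv=bf^{-1}g^{-1}fg=((bg^{-1})f)g\neq b$. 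Hence $v$ has an orbital $(p',q')$ containing $b$, and because $v$ is the identity near $a$ this orbital satisfies $a<p'<b<q'$, so $\{((a,b),f),\,((p',q'),v)\}$ is already a transition chain --- no conjugation or sliding is needed. With that substitution, and with the deferred details of the containment case written out (e.g.\ an interior fixed point $x$ of $fg$ gives $xf=xg^{-1}$, whence the orbital of $f$ containing $x$ and its image under $g^{-1}$ strictly cross), the proof goes through.
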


To simplify notation later,
we say that $G \leq \ploi$ admits a \emph{\bado}
if there exists a pair of signed orbitals $(A,f)$
and $(B,g)$ associated to $G$ such that $A \cap B \neq \emptyset$,
$A \neq B$, $\overline{A}\not\subset B$,
and  $\overline{B}\not\subset A$; that is,
for these intersecting orbitals, the closure of
one of the orbitals contains exactly
one endpoint of the other orbital.  
The group $G$ admits a \bado\ if and only if $G$ admits
either a transition chain or a \ones.
It follows from Lemma~\ref{lem:chainlessProps} that any subgroup 
of $\ploi$ admitting a \ones\ admits a transition chain (we will see 
in Theorem~\ref{main} that the converse is also true).
Corollary~\ref{cor:badoverlap} then 
follows immediately from Theorem~\ref{thm:towerSolveClass}
and Lemma~\ref{lem:weak-tower}

\begin{corollary}\label{cor:badoverlap}
If $G\leq\ploi$ and $G$ admits a \bado,
then $G$ is not soluble.
\end{corollary}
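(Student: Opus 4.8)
The plan is to reduce the hypothesis to the existence of a transition chain and then invoke the tower machinery already in place. Since the sentence immediately preceding the corollary records that $G$ admits a \bado\ if and only if $G$ admits either a transition chain or a \ones, the first task is to dispose of the \ones\ case by showing it always forces a transition chain. I would argue this by contraposition against Lemma~\ref{lem:chainlessProps}. Suppose $G$ were chainless yet admitted a \ones, realised by signed orbitals $(A,f)$ and $(B,g)$ with $A=(a,b)$, $B=(a,c)$, and $a<c<b$ (the case of a shared right endpoint being symmetric). Then $A\cap B=(a,c)\neq\emptyset$ and $A\neq B$, while $\overline{A}=[a,b]\not\subset B$ and $\overline{B}=[a,c]\not\subset A$, the latter because $A$ is open at its endpoint $a$. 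Consequently none of the three alternatives (a), (b), (c) of Lemma~\ref{lem:chainlessProps}(2) can hold, contradicting the conclusion that for a chainless group exactly one of them must hold. Hence any subgroup of $\ploi$ admitting a \ones\ fails to be chainless, i.e.\ it admits a transition chain.

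Combining this reduction with the stated equivalence, the hypothesis that $G$ admits a \bado\ yields in either case that $G$ admits a transition chain (equivalently, $G$ is not chainless). Applying Lemma~\ref{lem:weak-tower}, I then conclude that $G$ admits infinite towers, so the depth of $G$, defined as the supremum of the heights of its associated towers, is infinite.

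Finally I would appeal to Theorem~\ref{thm:towerSolveClass}, which asserts that $G$ is soluble with derived length $n$ exactly when the depth of $G$ equals $n$. Were $G$ soluble, its derived length would be some finite $n\in\Nbb$, forcing its depth to equal $n$ and contradicting the infinite depth just established. Therefore $G$ is not soluble, as required.

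I expect no serious obstacle: the substantive content resides entirely in Theorem~\ref{thm:towerSolveClass}, Lemma~\ref{lem:weak-tower}, and Lemma~\ref{lem:chainlessProps}, all quoted from earlier work and available for use here. The single point demanding genuine care is the reduction of a \ones\ to a transition chain, where one must verify that the shared endpoint of the two orbitals simultaneously blocks both closure-containment conditions of Lemma~\ref{lem:chainlessProps}(2); it is precisely the openness of the orbitals at the common endpoint that makes the argument go through.
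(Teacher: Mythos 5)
Your proposal is correct and follows essentially the same route as the paper: the paper also reduces a \bado\ to a transition chain (using Lemma~\ref{lem:chainlessProps} to handle the \ones\ case, exactly as you do) and then concludes via Lemma~\ref{lem:weak-tower} and Theorem~\ref{thm:towerSolveClass}. You have merely written out in full the verification that a shared open endpoint blocks all three alternatives of Lemma~\ref{lem:chainlessProps}(2), which the paper leaves implicit.
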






 \subsection{The split group and one-bump factors}\label{ss:split}


Let $G \leq \ploi$.  Given an element $f \in G$,
let $A_1,...,A_k$ be the orbitals of $f$.
For all $1 \le i \le k$, let $f_i$ be the
element of $\ploi$ defined by $f_i|_{A_i} = f|_{A_i}$,
and $xf_i=x$ for all $x \in I \setminus A_i$.
Each function $f_i$ has precisely one
component of support, the functions
$f_i$ commute with each other, and
$f=f_1 \cdots f_k$.
We call these functions $f_i$ the
\emph{one-bump factors} of $f$, and
we refer to the signed orbitals $(A_i,f_i)$
as the \emph{factor signed orbitals}
associated to $f$.

The \emph{split group $S(G)$} associated to
the group $G \leq \ploi$, introduced in \cite{bpasc},
is  the group generated by the one-bump factors 
of all of the elements of $G$.
Note that $G$ is a subgroup of $S(G)$, and that
$S(S(G))$ might not be the same group as $S(G)$ in general. 
Whenever $G$ is a subgroup of another group $H$,
then $S(G) \le S(H)$.  
It is immediate from the chain rule that the slope 
group of $S(G)$ is also the slope group of $G$;
that is, $\Pi_{S(G)}=\Pi_{G}$. 

We record the following result of the first author from~\cite{bpasc}
for use in Section~\ref{sec:algorithm}.

\begin{theorem}\label{thm:splitderlength}~\cite[Cor.~4.6]{bpasc} 
Suppose that $G$ is a subgroup of $\ploi$.  The derived
length of $G$ equals the derived length of $S(G)$.
\end{theorem}


 \subsection{The group $B$}\label{ss:B}


 Brin's group $B$ is introduced in a general form as 
 $G_1$ in Section~5 of~\cite{brinEA}.  A presentation of 
 $B$ is given by
 $$
 B=\langle \{w_i \mid i \in \Zbb\},s\mid w_i^s=w_{i+1}, [w_i^{w_k^m},w_j]= 1 
  \ (i<k, j<k, m\in \Zbb\setminus\{0\})\rangle.
 $$  
 Note that this is an ascending HNN extension of the 
 group generated by the $w_i$ using stable letter $s$.
 Applying Tietze transformations
 shows that $B$ is generated by the elements $s$ and $w_0$.
 These two elements are illustrated in Figure~\ref{fig-B};
 for a detailed piecewise definition of these two
 homeomorphisms, see~\cite{bpnsc}.
 
 \psfrag{s}{$s$}
 \psfrag{w0}{$w_0$}
 \begin{center}
 \begin{figure}
 \includegraphics[height=200pt, width=200pt]{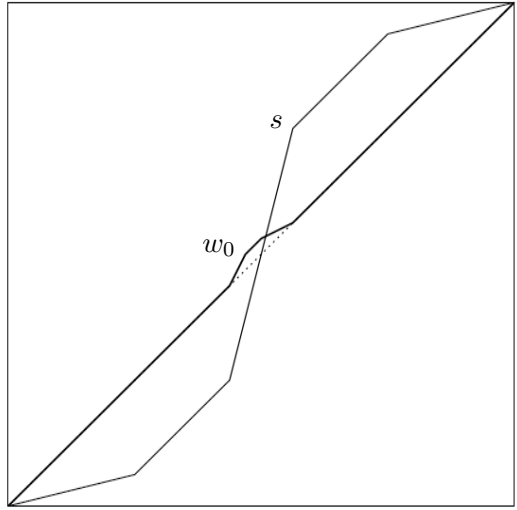}
 \caption{Generators of $B$\label{fig-B}}
 \end{figure}
 \end{center}
 \vspace{-.23 in}
 The ``s-curve'' $s$ acts by conjugation taking the 
 generator $w_0$  to an element $w_1$ with much larger support, 
 and such that $\supp{w_0}\cap \supp{w_0^{w_1}}=\emptyset$. 
 Thus these two elements commute.  
 Dynamical arguments using the definitions of the 
 generators $s$ and $w_0$ can then be made to verify 
 all of the relations of our presentation of $B$.

 The following is one of the main theorems of \cite{bpnsc}.
 
\begin{theorem}[Theorem~1.4 of~\cite{bpnsc}]\label{thm:tchainB}
 If $G\leq \ploi$ admits a transition chain, then $B$ embeds in $G$.
\end{theorem}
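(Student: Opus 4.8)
The plan is to turn the transition chain into the dynamical configuration of Brin's generators $s$ and $w_0$ and then recover the presentation of $B$. I would exhibit two elements $\sigma,\tau\in G$ playing the roles of $s$ and $w_0$, set $w_i\defeq\tau^{\sigma^i}$, and prove that $s\mapsto\sigma$, $w_0\mapsto\tau$ extends to an \emph{injective} homomorphism $\phi\colon B\to G$. Since $B$ is an ascending HNN extension of the subgroup generated by the $w_i$ with stable letter $s$, the argument divides into two tasks: (i) constructing $\sigma$ and $\tau$ so that every defining relator of $B$ holds, and (ii) showing that $\phi$ has trivial kernel.

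For task (i) I would first normalize: after replacing $g$ and $h$ by their inverses if necessary, assume both push points rightward on their orbitals $(a,b)$ and $(c,d)$ with $a<c<b<d$, so that the staircase overlap of the chain is a \bado. Since $G$ admits a transition chain, Lemma~\ref{lem:weak-tower} guarantees that $G$ admits infinite towers, and I would use this together with suitable products and conjugates of $g$ and $h$ to produce an s-curve $\sigma\in G$ with a single orbital $J=(\alpha,\beta)$ on which it pushes strictly rightward---so $\alpha$ is repelling and conjugation by $\sigma$ \emph{expands} any support lying in $J$ toward $\beta$---together with a one-bump element $\tau\in G$ whose support has closure contained in $J$. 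By Lemma~\ref{lem:supports} the conjugates satisfy $\supp{w_i}=\supp{\tau}\,\sigma^i$, so the expansion yields an infinite ascending tower $\supp{w_i}\subsetneq\supp{w_{i+1}}$ of orbitals marching toward $\beta$, matching the ``much larger support'' of $w_1$ in Figure~\ref{fig-B}. The delicate feature to engineer is the \emph{push-off} property $\supp{\tau}\,\tau^{\sigma}\cap\supp{\tau}=\emptyset$: this is the dynamical form of Brin's condition $\supp{w_0}\cap\supp{w_0^{w_1}}=\emptyset$, and it makes $w_0$ and $w_1$ overlap (hence not commute) while forcing $w_0$ and $w_0^{w_1}$ to commute. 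A genuinely technical point here is \emph{localization}: one must choose $\sigma$ and $\tau$ as words in $g,h$ whose supports are confined to $J$, so that the abstract relations in $G$ are governed entirely by the picture on $J$ and not disturbed by behaviour elsewhere.

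Granting this configuration, the relation $w_i^{\sigma}=w_{i+1}$ holds by definition, and the commutation relators $[w_i^{w_k^m},w_j]=1$ (for $i<k$, $j<k$, $m\in\Zbb\setminus\{0\}$) reduce via Lemma~\ref{lem:supports} to the disjointness $\supp{w_i}\,w_k^m\cap\supp{w_j}=\emptyset$. Because the supports are nested with $k$ and $w_k$ expands, the single push-off property already yields every index-$1$ relation $\supp{w_0}\,w_1^m\cap\supp{w_0}=\emptyset$ for all $m\neq0$ (a nonzero power of $w_1$ pushes $\supp{w_0}$ entirely off itself, in either direction), and the relations for larger $k$ follow by applying the index-shifting conjugation by $\sigma$. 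Thus all relators of $B$ are satisfied and $\phi$ is a well-defined homomorphism.

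The main obstacle is task (ii), the faithfulness of $\phi$: a priori the explicit PL maps $\sigma,\tau$ might satisfy extra relations that collapse $B$ onto a proper quotient---for instance, had the bumps $w_i$ been chosen pairwise disjoint, the base would become abelian and only a quotient of $B$ would be realized. To exclude this I would use the HNN normal form of $B$ (Britton's lemma), reducing injectivity to showing that every nontrivial reduced word in the base generated by the $w_i$ acts nontrivially on $J$ and that $\sigma$ contributes a genuine copy of $\Zbb$. Here the infinite ascending tower $\{\supp{w_i}\}$ is the crucial tool: given a nontrivial normal-form word, the bump of largest index occurring controls a subinterval of $J$ on which the action of the word is visibly nontrivial, so the supports cannot conspire to cancel. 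Carrying this ping-pong/support-nesting argument through all the higher commutation relators and through the ascending HNN layers---while keeping the push-off disjointness under control---is where the real work lies; by contrast the support computations underlying task (i) are routine applications of Lemma~\ref{lem:supports}.
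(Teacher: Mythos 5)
First, a point of reference: the paper does not prove this statement. Theorem~\ref{thm:tchainB} is imported verbatim as Theorem~1.4 of \cite{bpnsc}, and Section~\ref{ss:B} only sketches the dynamical picture of the generators $s$ and $w_0$. So there is no in-paper proof to compare against; your proposal has to stand on its own, and as written it contains a genuine error in addition to large deferred gaps.

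The concrete error is your specification of the ``s-curve''. You take $\sigma$ to have a \emph{single} orbital $J=(\alpha,\beta)$ on which it pushes every point strictly rightward, take $\tau$ with $\overline{\supp{\tau}}\subset J$, and assert that conjugation by $\sigma$ produces an ascending tower $\supp{w_i}\subsetneq\supp{w_{i+1}}$. That is impossible: if every point of $J$ moves right under $\sigma$, then the left endpoint of $\supp{w_i}$ also moves strictly right, so $\supp{w_i}\sigma$ can never contain $\supp{w_i}$; the translates drift toward $\beta$ without ever nesting. But the nesting $\supp{w_j}\subsetneq\supp{w_k}$ for $j<k$ (indeed containment in a fundamental domain of $w_k$) is exactly what both halves of your argument need --- it is what reduces the relators $[w_i^{w_k^m},w_j]=1$ to the push-off disjointness, and it is the ``crucial tool'' in your Britton's-lemma faithfulness step --- so the whole construction collapses with this configuration. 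The genuine s-curve must have a repelling fixed point in the \emph{interior} of $J$ (its graph crosses the diagonal), hence at least two orbitals inside $J$ pushed in opposite directions, so that conjugation expands an interval near the fixed point to a strictly larger interval containing it. That is precisely what a transition chain manufactures: with orbitals $(a,b)$ of $g$ and $(c,d)$ of $h$, $a<c<b<d$, and orientations chosen so that $g^{-1}$ pushes toward $a$ and $h$ pushes toward $d$, an element such as $g^{-1}h$ agrees with $g^{-1}$ on $(a,c]$ and with $h$ on $[b,d)$, hence repels outward from its fixed-point set in $[c,b]$ --- and your proposal never uses this, which is the only place the transition-chain hypothesis actually enters. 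Beyond this, the two items you yourself flag as ``where the real work lies'' --- producing $\sigma,\tau$ as explicit words in $g,h$ whose behaviour \emph{outside} $(a,d)$ does not violate the relators (the relations must hold in $\ploi$, i.e.\ on all of $[0,1]$, and you cannot pass to one-bump factors since those need not lie in $G$), and the faithfulness argument --- are named but not carried out, and together they constitute essentially the entire content of the theorem.
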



  \subsection{Controllers for chainless groups}\label{subsec:controllers-chainless}


  The material in this subsection is used in our proof of 
 Theorem \ref{thm:algorithm}, and relies primarily on 
 Sections~3.3 and~4.2 of~\cite{bpasc}.  
 The key motivation is to understand what must happen 
 in the absence of Brin's Ubiquity condition.
  
  Suppose $a<d\in [0,1]$.  If $g\in\ploi$ has an orbital 
 of the form $(a,b)$ or $(c,d)$  where 
 $a< b\leq d$ and $a\leq c<d$ then we say 
 \emph{$g$ realises an end of $(a,d)$}.  
 If there are both $b$ and $c$ so that $a<b\leq c<d$ and 
 $g$ has orbitals $(a,b)$ and $(c,d)$ then we say 
 \emph{$g$ realises both ends of $(a,d)$}.  
 Finally, if $g$ has orbital $(a,d)$, then we say 
 \emph {$g$ realises $(a,d)$}. 
 (and in this last case we  
 also say that $g$ realises both ends of $(a,d)$).
  
\begin{theorem}[Brin's Ubiquity Theorem~\cite{brinU}]\label{thm:brinU} 
 If a group $H\leq\ploi$ contains an 
 element that realises exactly one end of 
 an orbital of $H$, 
 then $H$ contains a subgroup isomorphic to 
 R.~Thompson's group $F$.
\end{theorem}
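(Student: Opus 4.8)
The plan is to build inside $H$ two elements that generate a copy of $F$. Write the given orbital of $H$ as $O=(a,d)$ and let $g\in H$ realise exactly one end of $O$; say $g$ has an orbital $(a,b)$ with $a<b<d$ sharing the endpoint $a$, while $g$ has no orbital of the form $(c,d)$. After replacing $g$ by $g^{-1}$ if necessary we may assume $xg>x$ on $(a,b)$. The engine of the argument is the elementary fact that on a single orbital the orbits are cofinal towards both ends: for every $p\in(a,d)$ one has $\inf(pH)=a$ and $\sup(pH)=d$. (If $e=\sup(pH)$ satisfied $a<e<d$, then $e\in\supp{H}$, so some $k\in H$ moves $e$; feeding a sequence $ph_i\to e$ into $k$ and using continuity produces a point of $pH$ beyond $e$ unless every element of $H$ fixes $e$, contradicting $e\in\supp{H}$.) Since each $H$-orbit stays inside a single orbital, every $h\in H$ maps $O$ to itself and hence fixes $a$ and $d$.

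Applying cofinality to $p=b$, I would choose $h\in H$ with $b<bh<d$. By Lemma~\ref{lem:supports} the element $v\defeq g^{h}$ has $\supp{v}=\supp{g}\,h$, whose component through $a$ is $(a,b)h=(a,bh)$ (using $ah=a$), and $v$ pushes right there. Thus $g$ and $v$ have orbitals $(a,b)\subsetneq(a,bh)$ meeting the common fixed point $a$, i.e.\ a \ones\ in which $v$ is the larger bump. This already shows the hypothesis forces a \ones; the remaining and substantial task is to upgrade this overlap to a genuine copy of $F$.

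Next I would realise the standard generators of $F$. Put $x_0\defeq v^{-1}$ and $x_1\defeq g$, and set $x_{n+1}\defeq x_0^{-1}x_nx_0=v\,x_n\,v^{-1}$ for $n\ge1$; then the component of $\supp{x_{n+1}}$ through $a$ is the image $(a,b)v^{-n}=(a,bv^{-n})$, a strictly decreasing chain of orbitals sharing $a$ and shrinking to $\{a\}$ (because $v^{-1}$ contracts $(a,bh)$ towards $a$). By construction the shift relations $x_0^{-1}x_nx_0=x_{n+1}$ hold for $n\ge1$, so there is a homomorphism from the infinite standard presentation of $F$ onto $\langle x_0,x_1\rangle$ as soon as the two defining relations $[x_0x_1^{-1},x_1^{x_0}]=1$ and $[x_0x_1^{-1},x_1^{x_0^2}]=1$ are verified. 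Each of these asserts that two explicit words have disjoint supports, and I would check them from the nested ``staircase'' of orbitals $(a,bv^{-n})$ near $a$. These disjointnesses are not automatic for an arbitrary choice and must be installed, and this is where piecewise-linearity enters: using the freedom to enlarge $bh$ (pumping the larger bump far to the right) and to replace $g$ by a power (which preserves its orbital while rescaling how $(a,b)$ is swept out), one arranges a fundamental domain for $v$ near $a$ whose closure contains $\overline{\supp{x_1}}$, so that the conjugates $x_n$ sit in the correct self-similar positions. The one-sided germ (slope at the fixed point $a$), which furnishes a homomorphism to $\Rbb_+^{*}$, is the bookkeeping device certifying that the required agreements can be met exactly in the PL category.

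Finally I would identify $\langle x_0,x_1\rangle$ with $F$. Given a surjection $F\twoheadrightarrow\langle x_0,x_1\rangle$, its kernel is a normal subgroup of $F$; by the classical fact that the derived subgroup $[F,F]$ is simple and that $F/[F,F]\cong\Zbb^{2}$, this kernel is either trivial or contains $[F,F]$, the latter forcing the group to be abelian. But the orbitals $(a,bv^{-n})$ are pairwise distinct, so the $x_n$ are pairwise distinct; were $\langle x_0,x_1\rangle$ abelian, the shift relation would give $x_{2}=x_0^{-1}x_1x_0=x_1$, a contradiction. Hence the group is non-abelian, the surjection is an isomorphism, and $\langle x_0,x_1\rangle\cong F\le H$. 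I expect the main obstacle to be the interface between the construction and this verification: arranging the two $F$-relations to hold \emph{on the nose} — rather than merely producing a group that maps onto or contains $F$ — while keeping the orbitals $(a,bv^{-n})$ genuinely nested and distinct at every scale near $a$ and controlling the behaviour of $g$ and $v$ away from $a$, where they may carry further bumps. This simultaneous control is the technical heart of the theorem, and it is exactly the point at which piecewise-linearity, with its finitely many breaks and well-defined endpoint germs, is indispensable.
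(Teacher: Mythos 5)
First, a point of comparison: the paper does not prove this statement at all — Theorem~\ref{thm:brinU} is quoted as a black box from \cite{brinU} — so you are in effect attempting to reprove the main theorem of Brin's paper rather than an internal lemma. Your opening moves are sound: the cofinality of $H$-orbits in the orbital $(a,d)$, the existence of $h\in H$ with $b<bh<d$, and the observation that $g$ and $v=g^h$ then form a \ones\ at $a$ are all correct.

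The genuine gap is exactly where you flag it yourself: you never establish that any specific pair of elements of $H$ satisfies the two defining relations $[x_0x_1^{-1},x_1^{x_0}]=[x_0x_1^{-1},x_1^{x_0^2}]=1$, and without that there is no surjection from $F$ to feed into your final (correct and standard) simplicity argument. Concretely, with $x_0=v^{-1}$ and $x_1=g$, the word $x_0x_1^{-1}=v^{-1}g^{-1}$ does fix a neighbourhood $(a,a+\delta)$ of $a$, since the germs of $g$ and $g^h$ at the common fixed point $a$ have the same slope; but its support may otherwise come arbitrarily close to $a+\delta$, whereas $\supp{x_1^{x_0}}$ meets $(a,d)$ in $(a,bv^{-1})$, and there is no a priori relation between $\delta$ and $bv^{-1}$. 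Moreover $g$, $h$ and hence $v$ may carry additional orbitals inside $(a,d)$ and in other orbitals of $H$, each of which can violate the required disjointness. The remedies you gesture at — enlarging $bh$, replacing $g$ by a power — move the germ at $a$, the point $bv^{-1}$, and the extra bumps simultaneously, and you give no argument that these constraints can be decoupled. Installing the two relations on the nose while controlling all of this is precisely the content of Brin's paper and occupies essentially its entire proof; as written, your argument delivers a one-sided overlap and a plan, not a copy of $F$.
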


Let $H$ be a subgroup of $\ploi$ with orbital $A$. 
If there is an element $h\in H$ such that $h$ realises 
one end of $A$ but not the other, then we say 
\emph{$A$ is imbalanced for $H$}.  
On the other hand, if whenever $h\in H$ realises 
one end of $A$, then $h$ realises the other,  
we say \emph{$A$ is balanced for $H$}.  
We say \emph{$H$ is balanced} if for every
$G\leq H$ and every orbital $A$ of $G$, 
the orbital $A$ is balanced for $G$.  
A group which is balanced will have, 
by definition, no orbital which satisfies 
Brin's Ubiquity condition.

Using this viewpoint, 
Lemma~\ref{lem:chainlessProps} gives rise to the following. 

\begin{corollary}\label{cor:chainlessIsBalanced}
If $H\leq \ploi$ is chainless then $H$ is balanced.
\end{corollary}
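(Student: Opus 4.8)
The plan is to prove the contrapositive at the level of orbitals: I will show that if a chainless group has an imbalanced orbital, then two of its element-orbitals must intersect in a way forbidden by Lemma~\ref{lem:chainlessProps}(2). First I would reduce to the single-group statement. Since every transition chain associated with a subgroup $G \le H$ is also associated with $H$, any subgroup of a chainless group is again chainless; hence it suffices to prove that every orbital of a chainless group $G$ is balanced for $G$.

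So suppose, for contradiction, that $G \le \ploi$ is chainless and that $A = (a,d)$ is an orbital of $G$ that is imbalanced for $G$. Then there is an element $h \in G$ realising exactly one end of $A$; by a symmetric argument I will treat the case where $h$ realises the left end, so $h$ has an orbital $(a,b)$ with $a < b \le d$. If $b = d$, then $(a,d)$ is an orbital of $h$, so $h$ realises both ends of $A$, contradicting the choice of $h$; thus $a < b < d$.

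The key observation is that $b$ lies in the interior of the orbital $A$. Since $A$ is a connected component of $\supp{G} = \bigcup_{g\in G}\supp{g}$, every point of $A$ lies in $\supp{G}$; in particular there is some $g \in G$ with $bg \neq b$. Let $(c,e)$ be the orbital of $g$ containing $b$. As $(c,e)$ is a connected subset of $\supp{G}$ meeting the component $A$, we have $(c,e) \subseteq A$, so $a \le c < b < e \le d$. Now the orbitals $(a,b)$ of $h$ and $(c,e)$ of $g$ intersect (their intersection is $(c,b) \neq \emptyset$), yet none of the three alternatives of Lemma~\ref{lem:chainlessProps}(2) can hold: they are not equal since $b < e$; one cannot have $\overline{(a,b)} \subset (c,e)$, as this would force $c < a$ while $a \le c$; and one cannot have $\overline{(c,e)} \subset (a,b)$, as this would force $e < b$ while $b < e$. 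This contradiction shows $A$ cannot be imbalanced on the left.

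The case where $h$ realises the right end of $A$ is entirely symmetric: here $h$ has an orbital $(c,d)$ with $a < c < d$, the point $c$ again lies in $\supp{G}$, and the orbital of $G$ containing $c$ crosses $(c,d)$ in a manner ruled out by Lemma~\ref{lem:chainlessProps}(2). I expect the only real content to be the observation in the third paragraph that the interior endpoint of the half-realised orbital must itself be moved by $G$, producing the forbidden crossing; the remaining steps are routine inequality bookkeeping. One could alternatively phrase the contradiction through a \ones\ and invoke the fact, noted after Lemma~\ref{lem:chainlessProps}, that any group admitting a \ones\ admits a transition chain, but the direct appeal to Lemma~\ref{lem:chainlessProps}(2) is cleaner.
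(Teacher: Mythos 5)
Your proposal is correct and follows essentially the same route as the paper's proof: in both arguments the interior endpoint of the half-realising orbital is observed to lie in the group orbital and hence to be moved by some other element of $G$, and the resulting pair of overlapping element-orbitals violates Lemma~\ref{lem:chainlessProps}(2) (equivalently, forms a \bado), contradicting chainlessness. The only cosmetic difference is which two orbitals you exhibit as the forbidden pair.
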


\begin{proof}
Suppose to the contrary that $H$ is chainless but
not balanced.  Then there is a subgroup $G$ of $H$,
an orbital $(a,b)$ of $G$, an element $g$ of $G$,
and an orbital $(c,d)$ of $g$ 
such that $a \le c$, $d \le b$, and
exactly one of the equations $a=c$ or $b=d$ holds.
Suppose that $a=c$ (the proof in the other case is
similar).  Then $d \in (a,b)$ so there is another
element $h$ of $G$ with $d \in \supp{h}$;
let $B$ be the orbital of $h$ containing $d$.
Then the signed orbitals $((c,d),g)$ and $(B,h)$
form a \bado, and so Lemma~\ref{lem:chainlessProps}
shows that $G$, and hence $H$, is not chainless,
giving the required contradiction. 
\end{proof}

Corollary~\ref{cor:chainlessIsBalanced} together with Lemma~3.12 of~\cite{bpasc}, 
give us the following useful consequence. 

\begin{lemma}[Controller Existence Lemma]\label{lem:controllers}
Suppose that $H\leq\ploi$ is a chainless group
 with a single orbital $A$, and suppose that there is 
 an element $h\in H$ which realises one end of $A$.  
 Let $\mathring{H}_A$ represent the subgroup of 
 $H$ which consists of all elements $g$ of $H$ 
 for which there is a neighbourhood in $A$ of the 
 ends of $A$ upon which $g$ acts as the identity. 
 Then there is an element $c\in H$ so that 
 $H=\langle c,\mathring{H}_A\rangle$, where 
 $c$ realises the orbital $A$ of $H$.
\end{lemma}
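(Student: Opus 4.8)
The plan is to combine the balance provided by Corollary~\ref{cor:chainlessIsBalanced} with the overlap trichotomy of Lemma~\ref{lem:chainlessProps}(2), reducing the statement to a single fact about the germs of $H$ at an end of $A$, and then to build $c$ from an element generating those germs.

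First I would record the consequences of balance. Writing $A=(a,b)$, every $g\in H$ fixes $a$ and $b$, so near each end $g$ is affine; this yields two slope homomorphisms $\lambda_a,\lambda_b\colon H\to\Rbb_{>0}$, where $\lambda_a(g)$ is the slope of $g$ immediately to the right of $a$ (and similarly at $b$). Since $H$ is piecewise linear, $\lambda_a(g)=1$ holds exactly when $g$ is the identity on a neighbourhood of $a$ in $A$, so $\ker\lambda_a$ consists of the elements trivial near $a$, and likewise for $b$; hence $\mathring{H}_A=\ker\lambda_a\cap\ker\lambda_b$. By Corollary~\ref{cor:chainlessIsBalanced} the group $H$ is balanced, so the orbital $A$ is balanced for $H$; this says precisely that for $g\in H$ we have $\lambda_a(g)\neq 1$ if and only if $\lambda_b(g)\neq 1$, whence $\ker\lambda_a=\ker\lambda_b=\mathring{H}_A$. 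In particular $\mathring{H}_A$ is normal in $H$, and since $h$ realises one end of $A$ the quotient $H/\mathring{H}_A\cong\lambda_a(H)$ is a nontrivial subgroup of $\Rbb_{>0}$.

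The heart of the matter is to show that this quotient is \emph{infinite cyclic}, for then any $c_0\in H$ whose image generates $\lambda_a(H)$ already gives $H=\langle c_0,\mathring{H}_A\rangle$. I would first use Lemma~\ref{lem:chainlessProps}(2) to show that in a chainless group no two distinct element-orbitals share an endpoint: if $(a,\beta_1)$ and $(a,\beta_2)$ with $\beta_1<\beta_2$ were orbitals of elements of $H$, then they intersect yet neither closure lies in the other, violating the trichotomy. Hence there is a single interval $(a,m)$ that is the orbital at the left end of \emph{every} element realising that end, and each such element acts on $(a,m)$ without interior fixed points (an interior fixed point would shrink its left orbital below $(a,m)$). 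Now suppose $\lambda_a(H)$ were not discrete, hence dense in $\Rbb_{>0}$. Taking two left-realisers $g_1,g_2$ whose slopes at $a$ have irrational logarithmic ratio, I would use density together with the intermediate value theorem applied to the displacement functions $x\mapsto xg-x$ to produce a product $g_1^{p}g_2^{q}$ that still realises the end $a$ (its slope there being close to but distinct from $1$) yet acquires an interior fixed point in $(a,m)$, giving it a left orbital $(a,m')$ with $m'<m$ and contradicting the previous paragraph. This discreteness step is the main obstacle; it is exactly the content of~\cite[Lemma~3.12]{bpasc}, which together with Corollary~\ref{cor:chainlessIsBalanced} I would cite here. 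Thus $\lambda_a(H)$ is infinite cyclic.

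Finally I would upgrade $c_0$ to an element $c$ realising all of $A$. Since $c_0$ realises both ends, $\supp{c_0}$ contains neighbourhoods of $a$ and of $b$ inside $A$; because $A$ is a \emph{single} orbital of $H$, finitely many elements whose supports are compactly contained in $A$ (hence lie in $\mathring{H}_A$) have supports overlapping one another and the two end-components of $\supp{c_0}$, so that together they cover a compact subinterval spanning the gap. Multiplying $c_0$ by a suitable such $z\in\mathring{H}_A$, and applying the product statement of Lemma~\ref{lem:chainlessProps}(2) to absorb the overlapping orbitals, yields $c=c_0z$ with $A$ as its unique orbital; as $z$ is trivial near $a$ we have $\lambda_a(c)=\lambda_a(c_0)$, so $c$ still generates $\lambda_a(H)$ modulo $\mathring{H}_A$. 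Therefore $c$ realises $A$ and $H=\langle c,\mathring{H}_A\rangle$, as required.
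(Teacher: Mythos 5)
Your proof is essentially the paper's: the paper gives no independent argument for this lemma, deriving it directly from Corollary~\ref{cor:chainlessIsBalanced} together with Lemma~3.12 of~\cite{bpasc}, which is exactly where you place the load-bearing discreteness step (everything else in your write-up --- the slope homomorphisms $\lambda_a,\lambda_b$, the identification $\mathring{H}_A=\ker\lambda_a=\ker\lambda_b$, the cyclic quotient --- is the surrounding bookkeeping, and it is correct). The only divergence is your final paragraph, which is unnecessary and whose mechanism would not work as described: in a chainless group with single orbital $A=(a,b)$, an element orbital $(a,\beta)$ with $\beta<b$ is impossible, since $\beta\in\supp{H}$ would be moved by some element of $H$ whose orbital through $\beta$ forms a \bado\ with $(a,\beta)$ (and by the same trichotomy, orbitals of elements of $\mathring{H}_A$ meeting $(a,\beta)$ are compactly contained in it, so they can never ``bridge'' a gap); hence any $c_0$ realising an end of $A$ already realises all of $A$ and no upgrading is needed.
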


Following the language and discussion of 
Section~3 of~\cite{bpasc}, we call the element 
$c\in H$ of Lemma \ref{lem:controllers} a 
\emph{controller of $H$ over $A$}, noting that 
$c$ is not unique among the controllers of 
$H$ over $A$, but that each such controller 
agrees with $c$ or $c^{-1}$ over some neighbourhood 
of the ends of $A$.  
Now, given $h\in H$, 
there is a unique integer $k$ and 
$\mathring{h}\in \mathring{H}_A$ so that 
$h=c^k\mathring{h}$ with respect to our 
choice of controller $c$ for the orbital $A$.


\section{Transition chains in finitely generated non-soluble subgroups}\label{sec:main}


The key to understanding why finitely generated nonsoluble 
subgroups of $\ploi$ always admit transition
chains turns out to be a fact about orbits of breakpoints.

\begin{lemma}\label{orbits}  
If $G<\ploi$ is finitely generated with a finite generating set $X$, 
 then the set 
of all breakpoints of $G$ has finitely many orbits under 
the action of $G$.  Moreover, this number of orbits is 
bounded above by the cardinality of the set 
of breakpoints of the generating set $X$.
\end{lemma}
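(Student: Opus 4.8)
The plan is to reduce everything to a single containment: that every breakpoint of every element of $G$ lies in the $G$-orbit of some breakpoint of a generator. Concretely, writing $B \defeq \breaks{X}$ and letting $BG \defeq \{bg \mid b \in B,\ g \in G\}$ denote the union of the $G$-orbits passing through $B$, I aim to prove $\breaks{G} \subseteq BG$. Granting this, since also $B \subseteq \breaks{G}$, every $G$-orbit meeting $\breaks{G}$ is one of the orbits through a point of $B$; as there are at most $|B| = |\breaks{X}|$ of these, both the finiteness and the stated bound follow at once.

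The engine is the chain rule in right-action form. For any $u,v \in \ploi$ I would first record the one-sided inclusion
\[
\breaks{uv} \subseteq \breaks{u} \cup (\breaks{v})u^{-1}.
\]
This is just the contrapositive of the statement that if $u$ is differentiable at $x$ and $v$ is differentiable at $xu$, then $uv$ is differentiable at $x$; note that only this one direction is needed, so the possible cancellation of breakpoints (which can only shrink $\breaks{uv}$) causes no trouble. I would also record the identity $\breaks{g^{-1}} = (\breaks{g})g$, valid because a break of $g$ at $x$ is exactly a break of $g^{-1}$ at its image $xg$; in particular $\breaks{x^{-1}} \subseteq (\breaks{X})x \subseteq BG$ for each generator $x \in X$.

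With these in hand, I would prove $\breaks{g} \subseteq BG$ for all $g \in G$ by induction on the length of a word in $X \cup X^{-1}$ representing $g$. The base case covers $g \in X \cup X^{-1}$ via $\breaks{x} \subseteq B$ and the inverse identity above. For the inductive step, write $g = uv$ with $v \in X \cup X^{-1}$ and $u$ of shorter length; then
\[
\breaks{g} \subseteq \breaks{u} \cup (\breaks{v})u^{-1},
\]
where $\breaks{u} \subseteq BG$ by the inductive hypothesis and $(\breaks{v})u^{-1} \subseteq (BG)u^{-1} = BG$ because $u^{-1} \in G$ and $BG$ is a union of full $G$-orbits, hence $G$-invariant. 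Taking the union over all $g \in G$ gives $\breaks{G} \subseteq BG$, which completes the argument.

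The only real point requiring care is the right-action bookkeeping: getting the inverse on the correct factor in the chain-rule inclusion and in $\breaks{g^{-1}} = (\breaks{g})g$. Everything past that is a routine induction, and I expect no genuine obstacle, since only the one-sided containment coming from the chain rule is ever used.
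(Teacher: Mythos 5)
Your proposal is correct and is essentially the paper's own argument in inductive packaging: the containment $\breaks{uv}\subseteq\breaks{u}\cup(\breaks{v})u^{-1}$, iterated along a word $\alpha_1\cdots\alpha_n$, is exactly the paper's step of locating the maximal prefix $\alpha_1\cdots\alpha_j$ that is affine near the given breakpoint and pulling back a breakpoint of $\alpha_{j+1}$, and both arguments use the same identity $\breaks{g^{-1}}=(\breaks{g})g$ to handle inverse generators. No issues.
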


\begin{proof}
Let $G<\ploi$ be finitely generated by $X = \{a_1,a_2,\ldots,a_k\}$.  
We will show that every breakpoint of $G$ is in the same 
orbit as some breakpoint of one of the generators $a_i$.

Let $x\in (0,1)$ be a breakpoint of some $g\in G$, and 
write $g = \alpha_1\alpha_2\ldots \alpha_n$, where for
each $\alpha_i$, either $\alpha_i\in X$ or 
$\alpha_i^{-1}\in X$.
Let $j$ be maximal such that $\alpha_1\alpha_2\ldots \alpha_j$ 
has constant slope on some interval around $x$. 
Since $x$ is a breakpoint of $g$, we must have $j<n$.  

Let $y = x\cdot \alpha_1 \alpha_2 \ldots \alpha_j$.
By maximality of $j$, $y$ must be a breakpoint of $\alpha_{j+1}$.
We have $x = y\cdot \alpha_j^{-1} \alpha_{j-1}^{-1}\ldots \alpha_1^{-1}$,
and so $x$ is in the $G$-orbit of $y$.
If $\alpha_{j+1}\in X$ we are done.
Otherwise, if $\alpha_{j+1}^{-1} \in X$ we recall that
the breakpoints of $\alpha_{j+1}^{-1}$ are the images of the
breakpoints of $\alpha_{j+1}$ under the map $\alpha_{j+1}$.
Hence every breakpoint of $G$ is in the same $G$-orbit 
as one of the (finitely many) breakpoints of elements of 
the generating set $X$, establishing the lemma.
\end{proof}

We can now prove our main theorem.

\begin{theorem}\label{main}
Let $G < \ploi$ be generated by a finite set $X$.
The following are equivalent.
\begin{enumerate}
\item $G$ is not soluble.
\item $G$ admits a transition chain.
\item $G$ admits a \ones.
\item $G$ admits a tower of infinite height.
\end{enumerate}
Moreover, 
if $G$ is soluble,
the derived length of $G$ is less than or equal to the
cardinality 
of the set $\breaks{X}$
of breakpoints of elements of $X$.
\end{theorem}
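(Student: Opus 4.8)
The plan is to establish the equivalence of the four conditions by a cycle of implications, with the derived length bound handled separately at the end using Lemma~\ref{orbits}. Several implications are already available or nearly immediate from the background results. By Lemma~\ref{lem:weak-tower} together with Theorem~\ref{thm:towerSolveClass}, if $G$ admits transition chains then $G$ admits infinite towers and hence is non-soluble, giving $(2)\Rightarrow(1)$ and, combined with the discussion following Corollary~\ref{cor:badoverlap}, the implication $(3)\Rightarrow(2)$ (since the text already notes via Lemma~\ref{lem:chainlessProps} that any group admitting a one-sided overlap admits a transition chain). Conversely, $(4)\Rightarrow(1)$ is exactly Theorem~\ref{thm:towerSolveClass}, as a tower of infinite height forces infinite depth. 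So the substantive work is to close the cycle by proving the converse directions, and the natural target is $(1)\Rightarrow(2)$ together with $(2)\Rightarrow(4)$.

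\textbf{The heart of the argument is the contrapositive $\neg(2)\Rightarrow\neg(1)$}, i.e.\ that a finitely generated \emph{chainless} group is soluble, and indeed soluble of bounded derived length. First I would invoke Corollary~\ref{cor:chainlessIsBalanced} to note that a chainless $G$ is balanced, so no orbital of any subgroup satisfies Brin's Ubiquity condition. The plan is to show that for a chainless finitely generated $G$, the depth (in the sense of Theorem~\ref{thm:towerSolveClass}) is \emph{finite}, since finite depth is equivalent to solubility. The mechanism for bounding the depth should be Lemma~\ref{orbits}: the number of $G$-orbits of breakpoints of $G$ is at most $|\breaks{X}|$. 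The key insight I would pursue is that in a tower $(A_1,g_1)\supsetneq(A_2,g_2)\supsetneq\cdots$ one can, because $G$ is chainless and hence exemplary (Lemma~\ref{lem:chainlessProps}(1)), associate to each successive orbital a breakpoint (an endpoint of a strictly smaller orbital sits strictly inside the larger one, and forces a breakpoint of the signature near that location), in such a way that orbitals at different tower levels give breakpoints lying in distinct $G$-orbits. If that is arranged, then a tower of height exceeding $|\breaks{X}|$ would produce more than $|\breaks{X}|$ distinct $G$-orbits of breakpoints, contradicting Lemma~\ref{orbits}. This simultaneously bounds the depth by $|\breaks{X}|$ and, via Theorem~\ref{thm:towerSolveClass}, yields both solubility and the derived-length bound in the ``Moreover'' clause.

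\textbf{For the remaining implication $(2)\Rightarrow(4)$}, the plan is to quote Lemma~\ref{lem:weak-tower} directly: admitting a transition chain yields infinite towers, which is precisely a tower of infinite height. Combined with $(4)\Rightarrow(1)$ above and the already-established $(1)\Rightarrow(2)$ (the contrapositive just sketched), this closes the cycle $(1)\Rightarrow(2)\Rightarrow(4)\Rightarrow(1)$, and folding in $(3)$ via the Lemma~\ref{lem:chainlessProps} remark (which gives $(3)\Leftrightarrow(2)$) completes the equivalence of all four.

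\textbf{The main obstacle I anticipate} is the combinatorial core of the contrapositive: showing that a tower of height $h$ in a chainless group forces at least $h$ distinct $G$-orbits of breakpoints. The subtlety is that a single $G$-orbit of breakpoints could, a priori, contribute endpoints at many different tower levels, which would break the counting. Ruling this out is where I expect the chainless/exemplary hypothesis to do real work: I would use the fact that the tower orbitals are strictly nested and exemplary (endpoints of inner orbitals sit strictly interior to outer ones, bounded away from the ends) to argue that the breakpoints extracted at distinct levels cannot be identified by any element of $G$, perhaps by tracking which orbital of $G$ contains a given point and exploiting that an element of $G$ moving one extracted breakpoint to another would itself realise an end of some orbital in a forbidden way. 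Making this orbit-distinctness argument precise, and confirming that it yields a clean bound of exactly $|\breaks{X}|$ rather than some larger multiple, is the step that requires genuine care.
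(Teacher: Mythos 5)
Your overall skeleton matches the paper's: (2)$\Rightarrow$(4) is Lemma~\ref{lem:weak-tower}, (4)$\Rightarrow$(1) is Theorem~\ref{thm:towerSolveClass}, (3)$\Rightarrow$(2) is Lemma~\ref{lem:chainlessProps}, and the substantive content is the contrapositive of (1)$\Rightarrow$(2) via a pigeonhole comparing tower height against the number of $G$-orbits of breakpoints (Lemma~\ref{orbits}). But there are two genuine gaps. First, the core step is only announced, not carried out: you say you would ``argue that the breakpoints extracted at distinct levels cannot be identified by any element of $G$, perhaps by tracking which orbital of $G$ contains a given point,'' and you yourself flag this as the obstacle. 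That is exactly where the paper does its real work, and two complications remain unresolved in your sketch: (a) an endpoint of a tower orbital $A_i$ need not be a breakpoint of its signature (the signature may have an adjacent orbital meeting it with matching slope), so the paper first widens each level to the maximal chain of adjacent orbitals and takes the extreme endpoints $c_i,d_i$ of the widened chain around $A_{i+1}$, which are then genuine breakpoints of $g_{i+1}$; (b) the contradiction is obtained not by proving orbit-distinctness directly but by supposing $c_r\cdot g=c_s$ for $r<s$, using the absence of complex overlaps to force $\overline{(c_r,d_r)}\cdot g\subset A_s$, and then noting that $g$ must move $c_r$ rightwards across $a_s$ while moving $d_r$ leftwards across $b_s$; since each orbital of $g$ moves points in one direction, $g$ has an orbital $(x,y)$ with $x<a_s<y<b_s$, which forms a transition chain with $(A_s,g_s)$. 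Without some such mechanism your counting argument does not close.

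Second, you assert that Lemma~\ref{lem:chainlessProps} yields $(3)\Leftrightarrow(2)$, but it only yields $(3)\Rightarrow(2)$; the paper explicitly defers the converse to this theorem and proves $(2)\Rightarrow(3)$ by a separate conjugation argument (if $a\notin\supp{g}$ then $f^g$ has an orbital sharing the left end $a$ with $(a,b)$ but with a different right end, giving a one-sided overlap; otherwise one pushes the endpoint-in-support condition outwards until running out of orbitals of $f$ or $g$). As written, your cycle establishes $(1)\Leftrightarrow(2)\Leftrightarrow(4)$ and $(3)\Rightarrow(2)$, but supplies no implication back into $(3)$.
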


\begin{proof}
Let $G=\langle X \rangle$ 
be a finitely generated 
subgroup of $\ploi$.
The implication (2) $\Rightarrow$ (4)
is Lemma~\ref{lem:weak-tower},
and the implication (4) $\Rightarrow$ (1) follows
immediately from Theorem~\ref{thm:towerSolveClass}.
Lemma~\ref{lem:chainlessProps} shows that
(3) $\Rightarrow$ (2).

Next we show that (2)  $\Rightarrow$ (3).
Suppose that $G$ admits a transition chain 
$\{((a,b),f),((c,d),g)\}$, where $a<c<b<d$.  

If $a$ is not in the support of $g$, 
then $f^g$ admits an orbital $(a,e)$ with $e\neq b$, 
and hence the pair of signed orbitals $\{((a,b),f), ((a,e),f^g)\}$ 
represents a \ones\ for $G$.

Similarly, if $d$ is not in the support of $f$, 
then there is an $e\neq d$ such that the pair 
$\{((c,d),g), ((e,d),g^f\}$  represents a \ones\ for $G$.

We extend this endpoint-support argument 
to the left and the right until we run out of orbitals 
of $f$ or of $g$.  Eventually we must fail to have 
an end of one of these signed orbitals in the support 
of an orbital of the other element, and therefore we can 
find a \ones\ where the signatures are either the elements 
$f$ and $f^g$ or the elements $g$ and $g^f$. 

Finally, we show both (1) $\Rightarrow$ (2) and
the claim on derived length.
Suppose that $G$ chainless. 
Let $n\in \Nbb$ so that $n-1$ is equal to the 
number of $G$-orbits of the set of breakpoints of $G$
(a finite number by Lemma~\ref{orbits}).  
Suppose that $G$ is either non-soluble or 
of derived length $z$, for some $z\geq n$.

We note in passing that we may assume $n>1$, since if $n=1$ then $G$ 
has no breakpoints, and so $G=\left\{1\right\}$ and hence the 
derived length of $G$ is $0$ which does not exceed the
 number $0$ of $G$-orbits in the set $\msc{B}_G$
of breakpoints of elements of $G$.  

Theorem~\ref{thm:towerSolveClass} implies that $G$ admits a tower 
\[
\msc{T}=\left\{(A_1,g_1),(A_2,g_2),\ldots,(A_n,g_n)\right\}
\] 
of height $n$,
which by Lemma~\ref{lem:chainlessProps} is exemplary,
and hence we may assume that the signed orbitals of 
$\msc{T}$ are indexed in such a fashion that 
for all indices $i<n$ we have $\overline{A}_{i+1}\subset A_{i}$.  
(In the case that $G$ is non-soluble, 
$G$ admits towers of arbitrary height by 
Theorem~\ref{thm:towerSolveClass}.)

The endpoints of $A_i$ might not be 
breakpoints of $g_i$; that is,
$g_i$ may have a disjoint orbital with the same
endpoint and the same slope for $g_i$ in a neighbourhood
of that endpoint.  To take this into account, we widen the
interval that we consider, as follows.
There is a maximal $k_i\in \Nbb$ 
such that there is an ordered tuple $X_i$ of signed orbitals 
\[
X_{i}=((A_{i1},g_i),(A_{i2},g_i),\ldots,(A_{ik_i},g_i)),
\]
where we write $A_{ij} = (a_{ij},b_{ij})$, satisfying the
properties that for each 
index $j<k_i$ we have $b_{ij}=a_{i(j+1)}$ and there 
is an index $m_i$ with $A_i=A_{im_i}$.  

Since $G$ is chainless, Lemma~\ref{lem:chainlessProps}
shows that $G$ does not admit {\bado}s.
From the fact that each $A_{ij}$ shares an end
with each of its `neighbours' in $X_i$, and 
$\overline{A}_{im_i} = \overline{A}_i\subset A_{i-1}$ for $i>1$,
we deduce that for $1<i<n$ and $1\leq j\leq k_{i}$ we have
$\overline{A}_{ij}\subset A_{i-1}$
since otherwise $G$ would admit a \bado.

Now, for each index $1\leq i<n$, let 
$c_i= a_{(i+1)1}$ and 
$d_i= b_{(i+1)k_{i+1}}$,
and let
$a_i= a_{im_i}$ and 
$b_i= b_{im_i}$.
Furthermore, set $c_n$ to be some 
breakpoint of $g_n$ in $A_n$ (such must exist since $g_n$ 
cannot be affine over $A_n$).  We do not define $d_n$.
If $1\leq i<n$ it is now the case (by the maximality of 
$k_{i+1}$) that $c_i$ and $d_i$ are 
breakpoints of the element $g_{i+1}$, and that 
$\overline {(c_i,d_i)}\subset A_i$.



As $n$ is larger than the number of orbits of 
breakpoints of $G$ under the action of $G$, there are 
indices $r<s$ so that $c_r$ and $c_s$ are in the same 
$G$-orbit. 
Hence there is an element $g\in G$ such that $c_r\cdot g=c_s\in A_s$.  
In particular, by Lemma~\ref{lem:supports} and 
the nonexistence of {\bado}s
we see that for each index $j$
the interval $A_{(r+1)j}\cdot g$ is an orbital 
of $g_{r+1}^g$ with  
closure properly contained in $A_s$ away from the ends 
of $A_s$.  In particular, we have $\overline{(c_r,d_r)}\cdot g \subset A_s$.  The above implies the following chain of relationships. 
\[c_r\leq a_{r+1} \le a_s < c_r\cdot g = c_s < d_r\cdot g < b_s \le b_{r+1}\leq d_r.\]
This means that $g$ moves $c_r$ to the right across $a_s$, while
also moving $d_r$ to the left across $b_s$.
However, any given orbital of $g$ has all of its points moved in 
the same direction by $g$, so $g$ must have at least two distinct 
orbitals, one orbital $(x,y)$ containing $a_s$ and another 
containing $b_s$.  
Consequently, we have that $x<a_s<y<b_s$ and 
so $\{((x,y),g),((a_s,b_s),g_s)\}$ is a transition chain of length two 
for $G$.
Since $G$ is chainless, this gives a contradiction, so we can conclude 
that $G$ is indeed soluble with derived length $z$ less than or equal 
to the number of $G$-orbits of the set of breakpoints of $G$.
Lemma~\ref{orbits} completes the proof.
\end{proof}

Note that the hypothesis that $G$ is finitely generated is not required 
for the equivalence of (2) and (3), and that these two conditions could 
be replaced by the single condition: ``$G$ admits a \bado".


\section{An algorithm to detect solubility}\label{sec:algorithm}


The goal of this section is to use
Theorem~\ref{main}
and the concept of controllers from 
Section~\ref{subsec:controllers-chainless}
to construct the algorithm
to solve the soluble subgroup recognition problem for the
proof of Theorem~\ref{thm:algorithm}.

Let $C\leq \ploi$.  In order to input a finite
list of elements of $C$ into our procedure,
we need to be able to write these elements
with some sort of data structure; for example,
if $C$ is R.~Thompson's group $F$, we may
input an element as a list of numerators and
denominators of the breakpoints and slopes
of the homeomorphism, since all of these are
rational numbers, but we may instead input the 
element as a word over a finite generating
set for $F$, as a tree pair diagram, or as any
other construct that encodes this information.
For whatever structure is used, there
are several pieces of information we need
to be able to calculate from this data,
which we list in the following processes.
Some of these processes are required
to hold for the (potentially larger) split group $S(C)$ 
(see Section~\ref{ss:split}
for this construction).

{\flushleft{\it {\bf Processes:}}}
\begin{enumerate}
\item Given $g, h\in S(C)$ determine $gh$ and $g^{-1}$. \label{proc:gpops}
\item Given $g\in C$, determine its set of 
    breakpoints $\msc{B}_g$.\label{proc:bp}
\item Given $g \in S(C)$ and a breakpoint or orbital endpoint $x$ of $S(C)$, 
   compute  $x \cdot g$.\label{proc:bpeval}
\item Given two points $a,b \in [0,1]$ that occur either as 
  breakpoints or as orbital endpoints of elements of $S(C)$, 
  determine whether $a<b$, $a=b$, or $a>b$.\label{proc:bpcompare}
\item Given $g\in S(C)$, produce the finite tuple 
  $X_g=[A_1, A_2,\ldots,A_{k_g}]$ of all 
  orbitals of $g$, where each $A_i$ is stored as the ordered pair 
  $(a_i,b_i) = (\inf(A_i),\sup(A_i))$, and 
  $a_1<a_2<\ldots<a_{k_g}$ .\label{proc:tuple}
\item Given a signed orbital $(A,g)$ associated with
  $S(C)$, output the factor signed orbital $(A,h)$ satisfying
 $h|_A=g|_A$ and $\supp{h}=A$.\label{proc:split}
\item Given a signed orbital $((a,b),g)$ of $S(C)$, determine the 
  slopes $m_{ga} \defeq ag_+'$ and $m_{gb}\defeq bg_-'$ 
  of the affine components of the graph of $g$ over $(a,b)$ 
  near $a$ and $b$ respectively.\label{proc:endptslope}
\item Given two elements $m_1,m_2$ of the slope group $\Pi_C=\Pi_{S(C)}$ of $C$, 
  compute $m_1m_2$ and $m_1^{-1}$, and
  determine whether or not $m_1<m_2$, $m_1=m_2$, 
  or $m_1>m_2$.\label{proc:slopecompare}
\item 
  Given a finite set $Z=\{m_1,m_2,\ldots,m_k\}$ of 
  positive numbers in $\Pi_C=\Pi_{S(C)}$,
  determine if the multiplicative group 
  $\Pi_Z\defeq \langle m_1,m_2,\ldots,m_k\rangle\leq\Rbb^*_+$ 
  is discrete.
  If this group $\Pi_Z$ is discrete, further determine 
  integers $p_1$, $p_2$, $\ldots$, $p_k$ so that  
  $\Pi_{Z,s}\defeq m_1^{p_1}m_2^{p_2}\cdots m_k^{p_k}$ 
  is the least value in $\Pi_Z$ greater than one.\label{proc:euclid}
\end{enumerate}

We say that a subgroup $C\leq \ploi$ is a
\emph{computable group}\label{def:comp}
if  the elements of $C$
have representatives for which this list of processes
can be carried out by a computer.

 For a subgroup $C$ of $\ploi$, if the sets of breakpoints, 
 orbital endpoints and slopes of affine 
 components of graphs of elements of $C$ are sufficiently
 specialised sets of values, then
 these processes can be performed.
 We observe that all of the processes above can be carried 
 out for elements in R.~Thompson's group $F$ by a modern computer.
 Moreover, $F$ is equal to its own
 split group $S(F)$, and so these processes can be performed
 in $S(F)$.
 For the most complex process, namely
 Process~\ref{proc:euclid}, one uses a generalised Euclidean Algorithm 
 on the log base two values of the sets of slopes to determine 
 the integers $p_i$ 
 in this case.  Hence $F$ is computable.

 We also note that any subgroup of a computable group 
 is computable.
For the algorithm we provide below, we actually work 
in subgroups of the split group $S(G)$ of our original 
computable group $G$.
As a consequence the following corollary and lemma will be
applied several times.  Corollary~\ref{cor:subsplitder} follows
immediately from Theorems~\ref{thm:splitderlength}  
and~\ref{main}, and
the fact that whenever $H_1$ is a subgroup of a group $H_2$,
the derived length of $H_1$ is at most the derived
length of $H_2$.

\begin{corollary}\label{cor:subsplitder}
Let $G$ be a subgroup of $\ploi$, and let $H$ be a subgroup
of the split group $S(G)$ containing $G$.
\begin{enumerate}
\item The derived length of $G$ equals the derived length
of $H$.
\item If $H$ admits a \bado, then $G$ also admits a 
\bado\ and $G$ is not soluble.
\end{enumerate}
\end{corollary}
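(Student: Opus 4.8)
The plan is to prove the two parts in order, feeding part~(1) into part~(2). For part~(1), I would use a squeezing argument on derived lengths. Since $G \le H \le S(G)$, monotonicity of derived length under passage to subgroups shows that the derived length of $G$ is at most that of $H$, which in turn is at most that of $S(G)$. Theorem~\ref{thm:splitderlength} asserts that the derived length of $S(G)$ equals that of $G$, so the two outer quantities coincide and the middle one is trapped between equal values; hence all three derived lengths agree. I emphasise that this part uses nothing about finite generation or about \bado s.

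For part~(2), I would first reduce admitting a \bado\ to admitting a transition chain: by the discussion following Lemma~\ref{lem:chainlessProps}, a group admits a \bado\ precisely when it admits a transition chain, because a \ones\ always yields a transition chain. Now suppose $H$ admits a \bado. By Corollary~\ref{cor:badoverlap}, $H$ is not soluble, and then part~(1) forces $G$ to be non-soluble as well. To produce a \bado\ for $G$ itself, I would invoke Theorem~\ref{main}: the implication (1)$\Rightarrow$(2) there shows that a non-soluble finitely generated group admits a transition chain, hence a \bado. This delivers both conclusions of part~(2) simultaneously, namely that $G$ admits a \bado\ and that $G$ is not soluble.

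The step I expect to be the main obstacle is the passage from ``$G$ is non-soluble'' back to ``$G$ admits a \bado,'' since this is exactly the hard direction of Theorem~\ref{main} and relies on $G$ being finitely generated; without that hypothesis a non-soluble subgroup of $\ploi$ can be chainless, built from an infinite nest of towers that contains no transition chain. In the setting where Corollary~\ref{cor:subsplitder} is applied the relevant group $G$ is finitely generated, so Theorem~\ref{main} applies directly and no extra work is needed. If one instead wished to drop finite generation, the alternative would be a direct descent showing that a \bado\ for $S(G)$ already forces one for $G$; the difficulty there is that a product of one-bump factors in $S(G)$ can acquire orbital endpoints that are not orbital endpoints of any single element of $G$, so such a descent would have to exploit the laminar nesting of orbitals in chainless groups supplied by Lemma~\ref{lem:chainlessProps}, together with an induction on the number of factors, and this is the genuinely technical point one would have to control.
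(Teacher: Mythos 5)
Your proof is correct and follows essentially the same route as the paper, which derives the corollary ``immediately'' from Theorem~\ref{thm:splitderlength} (giving part~(1) by the same squeeze on derived lengths) together with Theorem~\ref{main} and Corollary~\ref{cor:badoverlap} (giving part~(2) by passing through non-solubility and back to a transition chain). Your observation that the return trip from non-solubility to a \bado\ relies on Theorem~\ref{main} and hence on $G$ being finitely generated is a fair caveat that the paper leaves implicit, since the corollary is only invoked for the finitely generated input group of the algorithm.
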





We will also apply the following lemma 
in the proof of Theorem~\ref{thm:algorithm},
in order to verify that our subgroups
remain inside $S(G)$.

\begin{lemma}\label{lem:nonsplitrep}
Let $G \leq \ploi$.  Suppose that
$(A_1,g_1),...,(A_q,g_q)$ are factor signed orbitals
associated to elements of $G$.
\begin{enumerate}
\item If $A_1 = \cdots =A_q$ and if $p_1,...,p_q$ are any
integers, then the one-bump factors of
$g_1^{p_1} \cdots g_q^{p_q}$ are also one-bump factors
of an element of $G$.
\item If $\overline{A_1} \subset A_2$, then the conjugate
$g_1^{g_2}$ is also a one-bump factor
of an element of $G$.
\end{enumerate}
\end{lemma}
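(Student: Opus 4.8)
The plan is to realise each target function explicitly as a one-bump factor of a single element of $G$, built from elements whose one-bump factors are the given $g_i$. By definition of a factor signed orbital, for each $i$ there is an $f_i \in G$ having $A_i$ as an orbital and $g_i$ as its one-bump factor there, so $f_i$ preserves $A_i$ setwise, fixes the endpoints of $A_i$, and satisfies $f_i|_{A_i} = g_i|_{A_i}$. For part (1) I would take $f \defeq f_1^{p_1}\cdots f_q^{p_q} \in G$, and for part (2) I would take $f \defeq f_1^{f_2} \in G$; in each case the work is to identify an orbital of $f$ on which $f$ agrees with the prescribed one-bump function.

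For part (1), write $A \defeq A_1 = \cdots = A_q$. Since each $f_i$ maps $A$ into $A$ and restricts there to $g_i$, a short induction on the word $f_1^{p_1}\cdots f_q^{p_q}$ gives $f|_A = g|_A$, where $g \defeq g_1^{p_1}\cdots g_q^{p_q}$; note $\supp{g} \subseteq A$. Hence $\supp{f}\cap A = \supp{g}$. First I would record that the endpoints of $A$ lie in the fixed-point set of $f$, so no connected component of $\supp{g}$ can be enlarged across an endpoint of $A$; consequently every orbital $B$ of $g$ is already an orbital of $f$, and $f|_B = g|_B$. Thus each one-bump factor of $g$ coincides with the one-bump factor of $f \in G$ at the corresponding orbital $B$, which is exactly the claim.

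For part (2), set $A_1' \defeq A_1 g_2$. By Lemma~\ref{lem:supports}, $\supp{f_1^{f_2}} = \supp{f_1}\,f_2$; since $A_1$ is a component of $\supp{f_1}$ and $f_2$ is a homeomorphism carrying components to components, $A_1 f_2$ is a component of $\supp{f_1^{f_2}}$, hence an orbital of $f_1^{f_2} \in G$. Because $\overline{A_1}\subset A_2$ and $f_2$ restricts to $g_2$ on its orbital $A_2$, we have $A_1 f_2 = A_1 g_2 = A_1'$. Chasing a point $y = x g_2$ with $x \in A_1$ through $f_2^{-1}f_1 f_2$, and using $f_1|_{A_1}=g_1$, $f_2|_{A_2}=g_2$ together with $x, x g_1 \in A_1 \subseteq A_2$, yields $y f_1^{f_2} = x g_1 g_2 = y\, g_1^{g_2}$, so $f_1^{f_2}|_{A_1'} = g_1^{g_2}|_{A_1'}$. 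As $g_1^{g_2}$ is supported exactly on $A_1'$, it is the one-bump factor of $f_1^{f_2}$ at $A_1'$, as required.

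The main obstacle in both parts is the same structural subtlety: the element $f$ I construct in $G$ will generally carry extra orbitals lying outside the interval of interest, so I must confirm both that these do not disturb the action on that interval and that the interval is a genuine connected component of $\supp{f}$ rather than merely of a restriction. I expect this to be handled cleanly in part (1) by the observation that $\partial A$ is fixed by every $f_i$ and hence by $f$, and in part (2) by the component-invariance of support under conjugation (Lemma~\ref{lem:supports}) combined with the containment $\overline{A_1}\subset A_2$, which forces $f_2$ to move $A_1$ precisely as $g_2$ does.
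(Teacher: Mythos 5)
Your proposal is correct and follows essentially the same route as the paper: pass to elements $f_i\in G$ having the $g_i$ as one-bump factors, form the corresponding product (resp.\ conjugate) in $G$, and check agreement with $g_1^{p_1}\cdots g_q^{p_q}$ (resp.\ $g_1^{g_2}$) on the relevant interval. Your explicit remark that the endpoints of $A$ are fixed by $f$, so orbitals of $g$ inside $A$ are genuine orbitals of $f$, is a point the paper leaves implicit, but the argument is the same.
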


\begin{proof}
Let $\hat g_1,...,\hat g_q$ be elements of $G$
such that $(A_i,g_i)$ is an associated
factor signed orbital of $\hat g_i$ for each $i$.

First suppose that $A_1 = \cdots =A_q$ and $p_1,...,p_q \in \Zbb$,
and let $c \defeq g_1^{p_1} \cdots g_q^{p_q}$.
Let $\hat c \defeq \hat g_1^{p_1} \cdots \hat g_q^{p_q}$.
Then since $g_i|_{A_1}=\hat g_i|_{A_1}$ for all $i$,
and each $g_i|_{A_i}$ is a homeomorphism of the interval $A_i$,
we have $c|_{A_1} = \hat c|_{A_1}$.  Hence the
one-bump factors of $c$ are exactly the
one-bump factors of $\hat c$ whose support is
contained in the interval $A_1$.

Next suppose that $\overline{A_1} \subset A_2$.
By Lemma~\ref{lem:supports}, the support of
the conjugate $g_1^{g_2}$ is the interval
$A_1\cdot g_2$.  Since $g_2$ acts as a homeomorphism
of the interval $A_2$ and fixes the rest of $I \setminus A_2$,
then $A_1 \cdot g_2 \subseteq A_2$.  
Similarly the conjugation action of $\hat g_2$
on $\hat g_1$ takes the signed orbital $(A_1,\hat g_1)$
to the signed orbital 
$(A_1\cdot \hat g_2,\hat g_1^{\hat g_2})$.
Since $\hat g_2|_{A_2}=g_2|_{A_2}$, then
$A_1\cdot \hat g_2=A_1\cdot g_2$ and
on this interval the functions $\hat g_1^{\hat g_2}$ and
$g_1^{g_2}$ agree.
Thus $g_1^{g_2}$ is a one-bump factor of $\hat g_1^{\hat g_2}$.
\end{proof}


While Corollary~\ref{cor:subsplitder} and Lemma~\ref{lem:nonsplitrep}
are used toward determining when the input group $G$
is not soluble, the following lemma will be used
toward determining when $G$ is soluble.

\begin{lemma}\label{lem:towertree}
Suppose that $H < \ploi$ is  generated by a 
finite set $Z$ of \nice, and let
$S_Z$  
be the set of signed orbitals associated to the
elements of $Z$.
Then $H$ is a soluble group, and the derived length
of $H$ is the largest height of a tower of signed
orbitals contained in the set $S_Z$.
\end{lemma}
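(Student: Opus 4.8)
The plan is to prove Lemma~\ref{lem:towertree} by induction, using the defining properties (Z0)--(Z3) of \nice\ to peel off one level of a wreath product at a time, thereby simultaneously establishing solubility and computing the derived length as the maximal tower height in $S_Z$. The key structural observation is that the containment relation on the orbitals $\{A_h \mid h \in Z\}$, which by (Z1) never involves a \bado\ (so orbitals are either equal, nested with closure-containment, or disjoint), organises $Z$ into a forest of towers. Let $d$ denote the largest height of a tower of signed orbitals inside $S_Z$. I would induct on $d$.

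First I would handle the base case and set up the induction. If $d=0$ then $Z$ is empty and $H$ is trivial, of derived length $0$. For the inductive step, consider the maximal orbitals among $\{A_h \mid h\in Z\}$ under inclusion; by (Z2) distinct elements have distinct orbitals, and by (Z1) together with Lemma~\ref{lem:chainlessProps} the maximal orbitals are pairwise disjoint. Grouping the elements of $Z$ according to which maximal orbital their support sits inside, I would argue that $H$ splits as a finite direct sum $H = \bigoplus_j H_j$, where $H_j$ is generated by those \nice\ whose orbital lies inside the $j$th maximal orbital; the direct sum structure is forced because elements supported in disjoint orbitals commute and no element can carry a point from one maximal orbital into another. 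Since the derived length of a finite direct sum is the maximum of the derived lengths of the summands, and the tower heights likewise distribute over the summands, it suffices to treat a single maximal orbital.

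So assume $Z$ has a unique maximal orbital $A = A_{h_0}$ with top element $h_0$. The heart of the argument is to show $H \cong K \wr \Zbb$, where $K$ is generated by the \emph{lower} \nice\ (those with $A_{h'}\subsetneq A$), so that the derived length of $H$ is one more than that of $K$. Here property (Z3) is exactly what I need: the interval $(r_{h_0}, r_{h_0}\cdot h_0)$ is a fundamental domain for the conjugation action of $\langle h_0\rangle$, so that every orbital $A_{h'}\subsetneq A$ can be pushed by a unique power of $h_0$ to lie in this fundamental domain. This lets me realise $K$ as the subgroup generated by all the $\Zbb$-translates (under conjugation by $h_0$) of the lower generators, exhibiting the base group of the wreath product as the restricted direct sum $\bigoplus_{n\in\Zbb} K_0^{h_0^n}$ of conjugates of a ``single copy'' $K_0$, with $h_0$ acting as the shift $s$. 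Because $A$ is maximal and the fundamental-domain condition guarantees the supports of distinct $\Zbb$-translates are disjoint, this direct sum is genuine and $h_0$ has infinite order, giving the semidirect (wreath) structure $K\wr\Zbb$. The generators of $K_0$ are themselves a set of \nice\ whose deepest tower has height $d-1$, so the inductive hypothesis applies to $K$ (equivalently to $K_0$, whose derived length equals that of $K$), and I conclude the derived length of $H$ equals $1 + (d-1) = d$.

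The main obstacle I anticipate is verifying cleanly that the wreath-product decomposition is exact --- that is, that $H$ really is isomorphic to $K\wr\Zbb$ and not merely a quotient or an extension that could collapse the derived length. The delicate point is ensuring the base group is a genuine \emph{restricted} direct sum of the conjugates $K_0^{h_0^n}$ with no unexpected identifications or overlaps; this is where (Z3) must be used carefully, since without the fundamental-domain property the translated supports could interleave and produce additional \nice{}-type overlaps, or the shift action could have fixed points. I would discharge this by checking that the closures of the translated orbitals $A_{h'}\cdot h_0^n$ are pairwise disjoint for distinct $n$ (from the fundamental-domain condition applied to $A_{h'}\subseteq (r_{h_0},r_{h_0}\cdot h_0)$) and that $h_0$ acts freely by shifting the index $n$. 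Once exactness is secured, the derived-length bookkeeping --- using the standard fact that $\mathrm{dl}(K\wr\Zbb)=\mathrm{dl}(K)+1$ for nontrivial $K$ --- is routine, and the identification with the smallest class of groups closed under $\wr\,\Zbb$ and finite direct sums follows directly from the two structural reductions above.
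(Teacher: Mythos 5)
Your proposal is correct and follows essentially the same route as the paper's proof: induct on the maximal tower height, split $H$ as a direct sum over the pairwise-disjoint maximal orbitals using (Z1)--(Z2), and use (Z3) to show that the conjugates of the subgroup generated by the lower generators have disjoint supports, yielding the wreath product $\langle P_h\rangle \wr \Zbb$ and the derived-length count $\mathrm{dl} = 1 + (d-1)$. The disjointness of translated supports that you flag as the delicate point is exactly the step the paper discharges via (Z3), so no gap remains.
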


\begin{proof}
Let $n$ be the largest height of a tower of
signed orbitals that are contained in $S_Z$;
we proceed by induction on $n$.

If $n$=0, then $S_Z$ and hence $Z$ is empty,
and $H$ is the trivial group, which is soluble of
derived length 0.
If $n=1$, then Properties Z1-Z2 
of the definition of a set of \nice\ (p.~\pageref{def:nice})
imply that the supports of the elements
of the generating set $Z$ are pairwise disjoint,
and so the elements of $Z$ commute.  Therefore
$H$ is abelian, and so $H$ is soluble with
derived length 1.

Now suppose that $n>1$ and the result is true for finite
sets satisfying Properties 
Z0-Z3 with maximum associated tower height at most $n-1$.
For each element $h \in Z$, let 
$A_h$ denote the support $\supp{h}$ (in 
the notation of Property Z0) and let
$\orbd(A_h)$ denote
the maximum height of a tower built from
elements of $S_Z$ such that $A_h$ is the smallest
orbital (that is, $A_h$ is contained in the supports of all
of the other signed orbitals in the tower).
Let $Y:=\{h \in Z \mid \orbd(A_h)=1\}$,
and for each $h \in Y$, let 
$P_h:=\{h' \in Z \mid A_{h'} \subsetneq A_h\}$.
Property Z2 implies the set $Y$ does not
contain two elements with the same support.
Then Property Z1 implies that the elements of $Y$
have disjoint support, and so 
$H$ is the
direct product of the subgroups $\langle h,P_h \rangle$
for $h \in Y$.  

Note that each subset
$P_h$ of $Z$ satisfies Properties Z0-Z3,
and its associated signed orbitals have
maximal tower height $n-1$, so by induction the group
$\langle P_h \rangle$ is a soluble group for each $h\in Y$,
and the derived length of $\langle P_h \rangle$
is the maximal height of a tower that can
be built from signed orbitals associated
to elements of $P_h$.

Now Property Z3 implies that 
for distinct
integers $j$ the groups
$\langle P_h \rangle^{h^j}$  have disjoint
support, and so the subgroup of $\langle h,P_h \rangle$
generated by these subgroups is the
direct product $\oplus_{j \in \Zbb} \langle P_h \rangle^{h^j}$.
The conjugation action of the group
$\Zbb = \langle h \rangle$ in this direct product
permutes the summands.  Hence the group
$\langle h,P_h \rangle$ is a wreath product
$\langle h,P_h \rangle = \langle P_h \rangle \wr \Zbb$.
Then $\langle h,P_h \rangle$ is again soluble.  Moreover,
the derived length of $\langle h,P_h \rangle$ is
one more than the derived length
of $\langle P_h \rangle$; that is, it is
the maximum height of a tower associated to
elements of the set $\{h\} \cup P_h$.
Since at least one $\langle P_h\rangle$ 
has derived length $n-1$, this implies that 
some $\langle P_h,h\rangle$ has derived length $n$.

Putting these results together, we have
$H = \oplus_{h \in Y} (\langle P_h \rangle \wr \Zbb)$
is a soluble group, with derived length $n$.
\end{proof}

We are now in position to prove Theorem~\ref{thm:algorithm}.

\begin{theorem}\label{thm:algorithm}
Let $C$ be a computable subgroup of $\ploi$.
The soluble subgroup recognition problem 
is solvable for $C$; that is,
there is an algorithm which, upon input of
a finite subset $X$ of $C$, can determine
whether or not the
subgroup $\langle X \rangle$ generated by $X$ is 
a soluble group.  
Moreover, in the case
that the group $\langle X \rangle$ is soluble, the algorithm also 
determines its derived length.
\end{theorem}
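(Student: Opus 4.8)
The plan is to decide solubility of $\langle X\rangle$ indirectly, by working inside the split group $S(C)$, where Theorem~\ref{main} turns the question into the detection of a \bado, and then to build the relevant subgroup explicitly via an effective controller construction whose recursion depth is capped a priori. First I would split the input: using Processes~\ref{proc:tuple} and~\ref{proc:split} I compute the set $Z_0$ of one-bump factors of the elements of $X$, with associated factor signed orbitals $\Sigma_0$, and set $G^\ast=\langle Z_0\rangle$. Since $\langle X\rangle\leq G^\ast\leq S(\langle X\rangle)$, Corollary~\ref{cor:subsplitder}(1) gives that $G^\ast$ and $\langle X\rangle$ have the same derived length, so it suffices to decide solubility of $G^\ast$ and compute its derived length. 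Applying Theorem~\ref{main} to the finite generating set $Z_0$, the group $G^\ast$ is non-soluble if and only if it admits a \bado; so the algorithm must either certify a \bado\ (reporting non-soluble, justified by Corollary~\ref{cor:subsplitder}(2)) or produce a generating set of \nice\ for $G^\ast$, whereupon Lemma~\ref{lem:towertree} both certifies solubility and returns the derived length as the largest height of a tower in the associated signed-orbital set.

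The heart of the procedure is a recursion on a finite set $\Sigma$ of factor signed orbitals, beginning with $\Sigma_0$. At each stage I first test every pair of orbitals for a \bado: reading endpoints off with Process~\ref{proc:tuple} and comparing them with Process~\ref{proc:bpcompare}, I check whether $A\cap B\neq\emptyset$, $A\neq B$, $\overline{A}\not\subset B$, and $\overline{B}\not\subset A$. If some pair qualifies, I halt and report non-soluble. Otherwise the orbitals of $\Sigma$ are pairwise equal, nested, or disjoint; for each outermost orbital $A$ I collect the generators whose support lies in $A$, note that those with orbital exactly $A$ realise both ends of $A$ (chainlessness gives balance, by Corollary~\ref{cor:chainlessIsBalanced}), and invoke the Controller Existence Lemma (Lemma~\ref{lem:controllers}) to obtain a controller $c$ realising $A$. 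Reading end-slopes with Process~\ref{proc:endptslope}, comparing them with Process~\ref{proc:slopecompare}, and using the discreteness test and generalised Euclidean step of Process~\ref{proc:euclid}, I determine for each such generator $g_i$ the integer $k_i$ with $g_i=c^{k_i}\mathring{g}_i$, where $\mathring{g}_i$ acts trivially near the ends of $A$ (so $k_i=0$ for generators whose orbital is strictly inside $A$); I then conjugate each $\mathring{g}_i$ by a suitable power of $c$ (images via Process~\ref{proc:bpeval}) to bring its support into a single fundamental domain $(r_c,r_c\cdot c)$ of $c$, which is exactly Property~(Z3). Lemma~\ref{lem:nonsplitrep} guarantees that $c$ and the normalised inner elements are again one-bump factors of elements of $G^\ast$, so the Processes remain available; I then recurse on the inner elements, whose support closures lie strictly inside $A$.

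Termination and correctness I would deduce from the depth bound. Each pass of the recursion adjoins one strictly smaller orbital to a nested chain, so reaching recursion depth $\ell$ with a nonempty common orbital exhibits a tower of height $\ell$ associated to $G^\ast$. Set $N\defeq|\breaks{Z_0}|$. If $G^\ast$ is soluble then, by Theorem~\ref{main} together with Theorem~\ref{thm:towerSolveClass}, its depth equals its derived length and is at most $N$; hence within $N$ levels the recursion must close up, leaving no common orbitals and a finite set of one-bump functions satisfying (Z0) (they are one-bump factors), (Z1) (no \bado\ was ever found), (Z2) (equal orbitals were merged through a common controller), and (Z3) (the normalisation above), so Lemma~\ref{lem:towertree} applies. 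Conversely, if the recursion is still producing a deeper common orbital at level $N+1$, this is a tower of height exceeding $N$, forcing $G^\ast$ to be non-soluble; and if a \bado\ surfaces at any level the same conclusion holds. Thus the procedure halts within $N+1$ levels with a correct verdict, and in the soluble case returns the derived length of $G^\ast$, which equals that of $\langle X\rangle$.

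The main obstacle is the effective controller step together with the subtlety that a \bado\ of $G^\ast$ need not be visible among the input generators. Implementing Lemma~\ref{lem:controllers} effectively, namely constructing $c$ and extracting the integers $k_i$ from the end-slopes, is precisely where the discreteness of the finitely generated local slope group and the generalised Euclidean computation of Process~\ref{proc:euclid} enter, and checking that the normalised inner elements genuinely satisfy~(Z3) while remaining one-bump factors of $G^\ast$ (via Lemma~\ref{lem:nonsplitrep}) needs care. Because a transition chain is typically created only by products and conjugates rather than appearing among the generators themselves, the crucial structural point is that I do not attempt to detect it directly: instead the a-priori depth bound $N$ from Theorem~\ref{main} forces soluble groups to close up in bounded time and non-soluble groups either to overshoot that bound or to reveal a \bado\ along the way, which is exactly what makes the otherwise open-ended descending-tower construction terminate.
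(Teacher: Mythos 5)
Your proposal follows the paper's proof in all essentials: pass to one-bump factors inside the split group (justified by Corollary~\ref{cor:subsplitder} and Lemma~\ref{lem:nonsplitrep}), iteratively build controllers over maximal orbitals using the discreteness test and generalised Euclidean step of Process~\ref{proc:euclid}, normalise the inner elements into a fundamental domain of the controller so that Lemma~\ref{lem:towertree} applies in the soluble case, and cap the recursion depth by the breakpoint count via Theorem~\ref{main}. The points you flag as delicate (detecting non-solubility through a non-discrete local slope group, mismatched end-slopes, or a failed fundamental-domain normalisation, rather than through a visible \bado\ among the current generators) are exactly the consistency checks the paper's Steps~3.3, 3.5--3.7 and 3.13--3.15 carry out, so no essential idea is missing.
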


\begin{proof}

Suppose $C$ is a computable subgroup of $\ploi$, and
$f_1,f_2,\ldots,f_m$ are elements of $C$
input to the algorithm, where $m$ is a positive integer. 
Let $G\defeq\langle f_1,f_2,\ldots,f_m\rangle$.
Then $G \leq C$; hence 
$G$ is also computable.

Before giving the technical details of the
algorithm, we begin with an overview of
our procedure. 
In the algorithm below, we build the tree 
of towers (up to conjugation equivalence of towers) 
for the group $S(G)$.  
We apply a breadth-first-search 
to the tree of nested orbitals of these towers 
(successively moving left to right
through all orbitals at the least depth
before moving on to orbitals with greater depth),
looking for {\bado}s. 

In the steps of this algorithm we 
maintain a finite 
set $SO$ of signed orbitals of the split group $S(G)$.
For any collection $S'$ of signed orbitals, there
is an associated \emph{signature group}, 
denoted by $\siggp(S')$, which is the 
group generated by the signatures 
of the orbitals in $S'$.
At every step the set $SO$ and its associated
signature group will satisfy the
following properties:
\begin{itemize}
\item[$SO$.1] Each element of $SO$ has 
a signature that is a one-bump factor of an element of $G$,
and hence $\siggp(SO)$ is a subgroup of $S(G)$.
\item[$SO$.2] The signature group $\siggp(SO)$ contains $G$.
\end{itemize}


We will also maintain two disjoint sets
$\seen$ and $\unseen$ of orbitals
(representing the ``seen'' and ``unseen'' orbitals, respectively), 
whose union is the collection
of unsigned orbitals associated to the signed orbitals in $SO$.
Each orbital $O$ will arise from Process~\ref{proc:tuple},
and so will be stored by the algorithm in the format 
$(\inf(O),\sup(O))$; that is,
by storing the endpoints of the interval $O$.
The orbitals in $\seen$ will satisfy the properties
\begin{itemize}
\item[$\seen$.1] 
No pair of signed orbitals in $SO$ whose (unsigned)
orbitals lie in $\seen$ forms a \bado.
\item[$\seen$.2] For every orbital $A$ of $\seen$,
there is exactly one signed orbital in $SO$ with
$A$ as its support; we denote this signed orbital 
$\sigma_A=(A,h_A)$.
\item[$\seen$.3] For every orbital $A$ of $\seen$,
there is a point $r_A \in A$
such that for every $A' \in \seen$ with $A' \subsetneq A$, 
the containment $A' \subseteq (r_A,r_A \cdot h_A)$
also holds.
\end{itemize}
Note that these properties imply that the set
$Z:=\{h_A \mid A \in \seen\}$ of signatures associated
to the orbitals in $\seen$
satisfies conditions Z0-Z3 of 
the definition of a set of \nice,
but properties $\seen$.1-$\seen$.3 also include a partial
extension of Z0-Z3 to $SO$.

We further partition $\unseen$ into sets $\Top$
and $\Lower$, to keep track of the order
in which orbitals will be processed.
Some of the orbitals $O$ in $\unseen$ and all
of the orbitals in $\seen$
will be assigned an ``orbital depth value''
$\orbd(O)$, which is a lower bound on the
numerical value of the orbital depth of $O$ in the group $\siggp(SO)$;
in particular, $\orbd(O)=n$ will mean that the
algorithm has found an exemplary tower of
height $n$ associated with $\siggp(SO)$
with a signed orbital of the form $(O,g) \in SO$
at the bottom.



As our computation proceeds, new (signed or unsigned) orbitals 
will be added to $SO$ and $\unseen$, and in other 
steps element orbitals will move from $\unseen$ to $\seen$
or will be removed from $SO$ or $\unseen$.  
Our calculation will terminate either when the algorithm
detects either a \bado\ in the group $\siggp(SO)$ or
an orbital that is `too deep',
or else (soon) after
all orbitals have been removed from $\unseen$, so that
$\unseen = \emptyset$.  We will process the set 
$\unseen$ carefully, keeping track of
the height of towers that have been
found, so that we will be guaranteed that the 
algorithm will stop if it finds no {\bado}s.


Throughout the description of the algorithm
we also include proofs that the sets
$SO$ and $\seen$ have the properties $SO$.1,$SO$.2,
and $\seen$.1,$\seen$.2,$\seen$.3 respectively, as well
as other commentary adding information about
the steps along the way.
In order to distinguish between
steps of the algorithm and explanations of its validity, 
we number and indent the steps of the algorithm. 
The remaining bulk of the proof that the algorithm 
is valid is provided after all of the steps
have been described.

\bigskip\eject

\centerline{{\it \underline{Start of algorithm}}}
{\flushleft{\it \underline{Step $0$} (Setting up the algorithm):}}\\

\begin{itemize}
\item[0.1] {Let $SO$, $\unseen$, $\seen$, $\Top$, and $\Lower$ be empty sets.
Let $\maxDepth := 0$ and $\counter\defeq 0$.
}
\end{itemize}

(Note that $\seen$ satisfies properties $\seen$.1,
$\seen$.2, and $\seen$.3 here.)

\begin{itemize}
\item[0.2] {For each input element $f_i$:  
Apply Process~\ref{proc:tuple} to
compute the tuple 
$X_{f_i}=  [A_{i1},A_{i2},\ldots, A_{ik_i}]$ of orbitals of $f_i$.   
Next use Process~\ref{proc:split}
to compute the corresponding $k_i$ signed orbitals 
$(A_{ij},\tilde{f}_{ij})$ associated to the split group $S(G)$, where
the $\tilde{f}_{ij}$ are the one-bump 
functions associated to $f_i$
(that is, $\tilde{f}_{ij}$ equals $f_i$ over $A_{ij}$).  
Add the pairs $(A_{ij},\tilde{f}_{ij})$ to the set $SO$ 
and add the orbital parts $A_{ij}$ to $\unseen$. 
}
\end{itemize}

Note that since the set
$SO$ contains the set of factor signed orbitals of
the generating set $X\defeq \{f_1,...,f_m\}$ of $G$, 
this set $SO$ satisfies properties $SO$.1 and $SO$.2.

\begin{itemize}
\item[0.3] {Compute the value $n := |\breaks{X}|$ (the total 
number of breakpoints of elements of $X$)
using Processes~\ref{proc:bp} and~\ref{proc:bpcompare}. 
}
\end{itemize}


{\flushleft{\it \underline{Step $1$} (Building $\Top$ and 
  $\Lower$ from $\unseen$):}}\\


\begin{itemize}
\item[1.1] Check whether $\unseen=\emptyset$. If so, then 
terminate the algorithm and output 
``The group $G$ is soluble with derived length $\maxDepth$''.


\item[1.2]  Determine, using Process~\ref{proc:bpcompare},
whether or not $\unseen \cup \seen$, and therefore $SO$, contains a \bado.
If so, then terminate the algorithm and output 
``The group $G$ is not soluble.''


\item[1.3] For all $A$,$B$ in $\unseen$:
Using Process~\ref{proc:bpcompare}, determine whether
$\bar{A}\subset B$, and if so add $A$ to $\Lower$.  

\item[1.4]  Let $\Top \defeq$ the complement of $\Lower$ in $\unseen$.\\
Let $\counter \defeq \counter+1$.
\end{itemize}

The variable $\counter$ is used to record, for use in
Step 2.1, whether Step 1 has been performed more
than once; after Steps 1-3 are done, the
algorithm can loop back to Step 1 again.



{\flushleft{\it  \underline {Step $2$} (Processing the 
  orbitals in $Top$ to detect excessive depth):}}\\

\begin{itemize}
\item[2.1] If $\counter=1$, then for all orbitals $A \in \Top$,
assign  the value $\orbd(A) \defeq 1$.  Otherwise, if $\counter > 1$,
then for each orbital $A \in \Top$, assign the value
$\orbd(A) \defeq 1 +$ the number of orbitals in $\seen$
that contain $A$.
(This requires Process~\ref{proc:bpcompare}.)

\item[2.2]  Compute $\maxDepth := 
     \max(\{\orbd(A) \mid A \in \Top\} \cup \{\maxDepth\})$.
If $\maxDepth > n$, terminate the algorithm and output
``The group $G$ is not soluble.''
\end{itemize}

Note that since this maximum is taken with
the old value of $\maxDepth$ included,
successive occurrences of Step~2.2 cannot
decrease the value of $\maxDepth$.




{\flushleft{\it  \underline {Step $3$} (Processing the leftmost element of $\Top$):}}\\

\begin{itemize}
\item[3.1] Among the orbitals in $\Top$ with the
smallest value of $\orbd$, find the leftmost orbital
(via Process~\ref{proc:bpcompare}), which we denote $(a,b)$
throughout this step. \\
Let
$Y=\left\{g_1,g_2,\ldots, g_q\right\}$ be the set
of signatures associated to the signed 
orbitals of $SO$ whose orbital is $(a,b)$.
\end{itemize}

(The current occurrence
of Step~3.1 is the start of 
the next step in our breadth-first-search.)

{\flushleft{\it  \underline {Step $3$a} (Building a 
  local controller $c$ over $(a,b)$):}}\\

\begin{itemize}
\item[3.2]  For $1 \leq i \leq q$:
Compute the slopes $m_{g_ia}\defeq ag_+'$ and 
${m_{g_ib}:=bg_-'}$ (Process~\ref{proc:endptslope}).
Let 
$
M_{aY} \defeq\left\{m_{ga}\mid g\in Y\right\}
$
 and 
$
M_{bY}\defeq  \left\{m_{gb}\mid g\in Y\right\}.
$
 
\item[3.3] Determine whether each of 
the groups $\Pi_{M_{aY}}=\langle M_{aY}\rangle$ and 
$\Pi_{M_{bY}}=\langle M_{bY} \rangle$ is discrete
(Process~\ref{proc:euclid}).  
If either of these groups is not discrete, 
terminate the algorithm and output 
``The group $G$ is not soluble.''

\item[3.4] Using Process~\ref{proc:euclid} again, compute
integers $p_1, p_2, \ldots,p_q$ such 
that $m_{g_1a}^{p_1}m_{g_2a}^{p_2}\cdots m_{g_qa}^{p_q}$
is the least real number $\Pi_{M_{aY},s}$ greater than 1
in the group $\Pi_{M_{aY}}$.  
Compute the element 
$
c:=g_1^{p_1}g_2^{p_2}\ldots g_q^{p_q}.
$
(Process~\ref{proc:gpops}).
\end{itemize}

Note that by construction, $c$ has an orbital
with $a$ at its left endpoint.  

\begin{itemize}
\item[3.5]  
Determine whether the orbital of $c$
realising $a$ and the orbital $(a,b)$ of $\Top$ form
a \bado \ (Processes~\ref{proc:tuple} and~\ref{proc:bpcompare}).
If so, then terminate the
algorithm and output ``The group $G$ is not soluble.''

\item[3.6]
Compute the slope $m_{cb}:=bc_-'$ of $c$ at the right endpoint $b$ 
of the orbital $(a,b)$ using Process~\ref{proc:endptslope}.
Also compute the least real number $\Pi_{M_{bY},s}$ greater than 1
in the group $\Pi_{M_{bY}}$ (Processes~\ref{proc:euclid} and~\ref{proc:slopecompare}).
If $m_{cb} \neq \Pi_{M_{bY},s}$, then
terminate the algorithm and output 
``The group $G$ is not soluble.''


\item[3.7]  
For $1\leq i\leq q$:
By iterating over successively larger positive and
negative integers $k$, computing $c^{k}$ (Process~\ref{proc:gpops}) and 
its slope $a(c^k)_+'$ to the right of $a$ (Process~\ref{proc:endptslope}),
and comparing this slope to $m_{g_ia}$ (Process~\ref{proc:slopecompare}),
find the unique integer
$l_{ia}$ such that the slope
$m_{g_ia}$ is equal to the slope $a(c^{l_{ia}})_+'$.
If $b(c^{l_{ia}})'_{-}$ is not equal to $m_{g_ib}$,
then terminate the algorithm and output 
``The group $G$ is not soluble.''
\end{itemize}

(Note that the justification for the outputs
of Steps~3.3,~3.6, and~3.7 is given below after the
completion of the algorithm.) 

{\flushleft{\it  \underline {Step $3$b} (Altering the orbital data sets):}}\\

\begin{itemize}
\item[3.8]  Add the signed orbital $((a,b),c)$ to $SO$.
\end{itemize}

Since all of the factors in the formula 
defining $c$ in Step 3.4 realise the orbital $(a,b)$,
Lemma~\ref{lem:nonsplitrep}(1) says that
$c$ is a one-bump factor of an element of $G$.
Hence Step 3.8 preserves properties $SO$.1 and $SO$.2
of the set $SO$.  


\begin{itemize}
\item[3.9]  For $1 \le i \le q$:  Calculate the element
$h_i:= g_i c^{-l_{ia}}$ of $S(G)$ (via Process~\ref{proc:gpops}).  
Use Process~\ref{proc:tuple}
to build the orbital tuple 
$$
X_{h_i}=[B_{i1}, B_{i2}, \ldots, B_{ik_i}]
$$ 
for $h_i$.  For each orbital 
$B_{ij}$ use Process~\ref{proc:split} to
produce the factor signed orbitals
$(B_{ij},\widetilde h_{ij})$, where
$\widetilde h_{ij}$ is the one-bump function 
which agrees with $h_i$ over $B_{ij}$.
Add the signed orbitals $(B_{ij},\widetilde h_{ij})$
to $SO$, and add the orbitals $B_{ij}$ to $\unseen$.
\end{itemize}


Note that we have $\overline{B_{ij}}\subset (a,b)$ for all $i,j$.
Each of the factors in the product $g_i c^{-l_{ia}}$ 
defining $h_i$ in Step 3.8 realise the orbital $(a,b)$,
and so Lemma~\ref{lem:nonsplitrep}(1) says that
each of the one-bump factors $\widetilde h_{ij}$ of $h_i$
is also a one-bump factor of an element of $G$.
Hence Step 3.9 also preserves properties $SO$.1 and $SO$.2
of the set $SO$.

(Note that in both Steps 3.8 and 3.9,
we may not be adding new signed orbitals 
to $SO$ each time; it may be the case, for
example, that the signed orbital already 
lies in $SO$ due to other generators of $G$.)

\begin{itemize}
\item[3.10]  For $1 \le i \le q$:  Determine whether
$g_i = c$ (using a combination of 
Processes~\ref{proc:bp},~\ref{proc:bpcompare},~\ref{proc:endptslope},
and~\ref{proc:slopecompare}).
If not, then remove
the signed orbital $((a,b),g_i)$ from $SO$.
\end{itemize}

Since $g_i=h_ic^{l_{ia}}$ and $h_i,c \in \siggp(SO)$
after Step~3.10 is applied, the group $\siggp(SO)$
is not altered in this step.  Hence
$SO$.1 and $SO$.2 are again preserved.

Note that the signed orbital $((a,b),c)$ is now
the only element of $SO$ with support $(a,b)$.

{\flushleft{\it  \underline {Step $3$c} (Checking for 
   {\bado}s beneath $c$):}}\\

\begin{itemize}
\item[3.11]  Determine, using Procedure~\ref{proc:bpcompare},
whether or not 
$SO$ contains a \bado.
If so, then terminate the algorithm and output 
``The group $G$ is not soluble.''

\item[3.12]  Compute the set $Proj_{(a,b)}$ 
of all of the signed orbitals in $SO$ with support 
in $(a,b)$ which do not realise the orbital $(a,b)$
(Process~\ref{proc:bpcompare}).
If $Proj_{(a,b)} = \emptyset$, then go to Step 1.
\end{itemize}

Note that since $SO$ has no {\bado}s (after Step 3.11),
each orbital $((r,s),g)$ in $Proj_{(a,b)}$ satisfies
$a<r<s<b$.  Since $c$ has $(a,b)$ as its only orbital
and the slope $ac_+'$ is greater than 1, we also have
$r<r \cdot c$.

\begin{itemize}
\item[3.13]  
Determine whether the orbitals $((r,s),g)$ and 
$((r\cdot c, s\cdot c), g^c)$
form a \bado; to do this, it suffices to check, using 
Processes~\ref{proc:bpeval} and~\ref{proc:bpcompare},
whether $r\cdot c< s$.
If so, terminate the algorithm and output
``The group $G$ is not soluble.''

\end{itemize}

Note that if the algorithm continues after
Step~3.13, then since $x \cdot c > x$ for
all $x \in (a,b)$, we have that
each $((r,s),g) \in Proj_{(a,b)}$
satisfies $(r,s) \subseteq (r,r \cdot c)$.

\begin{itemize}
\item[3.14]  
For each pair of elements $\rho=((r,s),g),\sigma=((u,v),h)$ of $Proj_{(a,b)}$:
By iteratively computing $u\cdot c^i$ for positive and negative integers $i$
(Processes~\ref{proc:gpops} and~\ref{proc:bpeval}) and comparing with $r$ (Process~\ref{proc:bpcompare}),
compute the unique integer $k_{\rho\sigma}$ such that
$r \le u \cdot c^{k_{\rho\sigma}}
   < r\cdot c.$
Construct the signed orbitals 
$\tau_{\rho\sigma}' :=
((u ,v) \cdot c^{k_{\rho\sigma}-1}),h^{c^{k_{\rho\sigma}-1}})$
and 
$\tau_{\rho\sigma} :=
((u ,v) \cdot c^{k_{\rho\sigma}}),h^{c^{k_{\rho\sigma}}})$
(and store them for use in later steps).
Use Process~\ref{proc:bpcompare} to determine 
whether $\tau_{\rho\sigma}'$ or
$\tau_{\rho\sigma}$ yield a \bado\ with $\rho$. 
If so, terminate the algorithm and output
``The group $G$ is not soluble.''
\end{itemize}

Note that continuation of the algorithm after Step~3.14
implies that one of the following must
hold:

$\begin{array}{ll}
\mathrm{(i)}\ (u,v)\cdot c^{k_{\rho\sigma}} = (r,s) &
\mathrm{(ii)}\ \overline{(u,v)}\cdot c^{k_{\rho\sigma}}  \subset (r,s)\\ 
\mathrm{(iii)}\ (u,v)\cdot c^{k_{\rho\sigma}} \subset (s,r \cdot c)  &
\mathrm{(iv)} (u,v)\cdot c^{k_{\rho\sigma}} \supset \overline{(r \cdot c,s \cdot c)}.
\end{array}$

\noindent In case (iv) we have 
  $\overline{(r,s)} \cdot c^{-k_{\rho\sigma}+1}
  = \overline{(r,s)} \cdot c^{k_{\sigma\rho}} \subset (u,v)$.

\begin{itemize}
\item[3.15]  
Determine a partial order $\prec$ on $Proj_{(a,b)}$ as follows.
For each ordered pair of elements $\rho=((r,s),g),\sigma=((u,v),h)$ of $Proj_{(a,b)}$:
Determine whether $v \cdot c^{k_{\rho\sigma}} < s$ 
(and hence whether the unsigned
orbital associated to $\tau_{\rho\sigma}$ satisfies
$\overline{(u,v)}\cdot c^{k_{\rho\sigma}}  \subset (r,s)$), using
Process~\ref{proc:bpcompare} and the stored $\tau_{\rho\sigma}$.
If so, we add 
$\sigma \prec \rho$ to the relation.\\
After this has been completed for all ordered
pairs, determine a leftmost element $\rho_0 = ((r_0,s_0),g)$ of
$Proj_{(a,b)}$ that is maximal with respect to
the relation $\prec$. 
(The choice of $\rho_0$ might not be unique, because two distinct
maximal signed orbitals might share the same support.)
\end{itemize}

To see that the relation $\prec$ is antisymmetric,
suppose that $\rho$ and $\sigma$ are as in Step~3.15
with $\rho\prec \sigma\prec \rho$.  Then 
$\overline{(r,s)}\cdot c^{k_{\sigma\rho}+k_{\rho\sigma}}\subset 
\overline{(u,v)}\cdot c^{k_{\rho\sigma}}\subset (r,s)$,
which is impossible since $c$ moves all points to the right 
on $(a,b)$ and so cannot conjugate an orbital in $(a,b)$ inside itself.

From Step~3.13, we have that the support $(r_0,s_0)$ 
of $\rho_0$ satisfies $(r_0,s_0) \subseteq (r_0,r_0 \cdot c)$.
From the note after Step~3.14, maximality of $\rho_0$
implies that every
signed orbital $\sigma=((u,v),h)$ of $Proj_{(a,b)}$
satisfies either 

$
\begin{array}{ll}
\mathrm{(i)}\ (u,v)\cdot c^{k_{\rho_0\sigma}} = (r_0,s_0), &
\mathrm{(ii)}\ \overline{(u,v)}\cdot c^{k_{\rho_0\sigma}}  \subset (r_0,s_0), \text{ or}\\
\mathrm{(iii)}\ (u,v)\cdot c^{k_{\rho\sigma}} \subset (s_0,r_0 \cdot c).&
\end{array}
$

\noindent Therefore 
$(u,v)\cdot c^{k_{\rho\sigma}} \subset (r_0,r_0 \cdot c)$.
That is, for every signed orbital $\sigma \in Proj_{(a,b)}$, 
the support of the 
signed orbital $\tau_{\rho_0\sigma}$
 is contained in the interval $(r_0,r_0 \cdot c)$.

\begin{itemize}
\item[3.16]   
For each element $\sigma=((u,v),h)$ of $Proj_{(a,b)}$:
Add the signed orbital $\tau_{\rho_0\sigma}$ to $SO$,
and add the associated orbital $(u,v)\cdot c^{k_{\rho_0\sigma}}$
to $\unseen$.
\end{itemize}

Now Lemma~\ref{lem:nonsplitrep}(1)
says that powers of $c$ are one-bump factors of
elements of $G$. In Step~3.16, since
$\overline{(u,v)} \subseteq (a,b)$,
Lemma~\ref{lem:nonsplitrep}(2) says that
the signature $h^{c^{k_{\rho_0\sigma}}}$ 
of $\tau_{\rho_0\sigma}$ is also a one-bump
factor of an element of $G$.  Therefore
Step~3.16 preserves properties $SO$.1 and $SO$.2
of the set $SO$.

\begin{itemize}
\item[3.17]  Determine, using Procedure~\ref{proc:bpcompare},
whether or not 
$SO$ contains a \bado.
If so, then terminate the algorithm and output 
``The group $G$ is not soluble.''
\end{itemize}
 
(Step 3.17 is not strictly necessary, since Steps~3.14 and~3.15
guarantee that no new \bado\ is added to $SO$ in 3.16;
we include Step 3.17 to highlight the fact that
$SO$ does not contain a \bado\ in the following steps.)

\begin{itemize}
\item[3.18]
For each unsigned orbital 
$(u,v)\in \unseen$ such that 
there is an element $h\in \ploi$ with $((u,v),h)\in Proj_{(a,b)}$ and 
${(u,v)\cap (r_0,r_0\cdot c)=\emptyset}$: 
Remove all signed orbitals from $SO$ whose
associated unsigned orbital is $(u,v)$,
and remove the unsigned orbital $(u,v)$ 
from $\unseen$.
\end{itemize}

For any orbital $\sigma=((u,v),h)$ removed from
$SO$ in Step~3.18,
the related element $\tau_{\rho_0\sigma}$ 
with signature $h^{c^{k_{\rho_0\sigma}}}$
(added to $SO$ in Step~3.16)
remains in $SO$.  Since $((a,b),c) \in SO$
as well (from Step~3.8), we have
$h^{c^{k_{\rho_0\sigma}}},c \in \siggp(SO)$,
and so $h \in \siggp(SO)$ after Step~3.18
is complete.  That is, Step 3.18 does
not alter the group $\siggp(SO)$, and
so step~3.18 preserves properties $SO$.1 
and $SO$.2 of the set $SO$.

\begin{itemize}
\item[3.19]
Remove the orbital $(a,b)$ from $\unseen$ 
and add it to $\seen$. 
\end{itemize}

Since $\unseen \cup \seen$
had no {\bado}s in Step~3.17,
and no orbitals were added to
this set in the intermediate Step~3.18,
then Step~3.19 preserves property $\seen$.1.
The fact that Step~3.19 preserves
property $\seen$.2 of the set $\seen$
follows from the
fact that Step~3.10 has been performed
for the orbital $(a,b)$ in the current
instance of Step~3, and
property $\seen$.3 follows from Steps~3.13 through~3.18.

\begin{itemize}
\item[3.20]
Proceed again to Step~$1$.
\end{itemize}

\centerline{{\it \underline{End of algorithm}}}


\bigskip

It remains to show that this algorithm
will terminate on every possible input,
and that when it terminates, it outputs
the correct answer.  We begin with the latter.

In Step~0 of this algorithm, a set $SO$
satisfying properties $SO$.1 and $SO$.2 is
computed, and in all subsequent steps
in which the set $SO$ is changed,
namely Steps~3.8,~3.9,~3.16, and~3.18,
these two properties have been
shown to be preserved.  
Therefore the signature group
$\siggp(SO)$ associated to $SO$
satisfies $G \leq \siggp(SO) \leq S(G)$.
Now Corollary~\ref{cor:subsplitder}(1)
says that the derived length of
$G$ equals the derived length of $\siggp(SO)$
throughout the algorithm.

In all steps in which a \bado\ is found 
among the elements of $SO$, 
namely Steps~1.2,~3.11,~3.13,~3.14,
and~3.17, we have that the signature group
$\siggp(SO)$ admits a \bado.
 Corollary~\ref{cor:subsplitder}(2) then
shows that $G$ is not soluble, verifying
the output of these five steps.

In Step~2.2, if $\maxDepth$ is found to
be greater than the number $n$ of 
breakpoints among the finite set of
homeomorphisms in the input to the
algorithm, then the algorithm has
found an orbital $A \in \unseen$ that
is contained in at least $n$ orbitals
in $\seen$.  Now $\unseen \cup \seen$
is the set of unsigned orbitals associated
to the set $SO$ of signed orbitals, and
 at this step we know
that the set $SO$
contains no {\bado}s (from
Step~1.2).  Thus the
$n$ unsigned orbitals in $\seen$ together with $A$
arise from $n+1$ signed orbitals
in $SO$ that form a tower of height $n+1$,
with the orbital associated to $A$ at the ``bottom'',
and so the orbital depth of $A$
with respect to the signature group
$\siggp(SO)$ must be at least $\orbd(A)>n$.
Then Theorem~\ref{thm:towerSolveClass}
implies that the derived length
of $\siggp(SO)$ is at least $n+1$.
Since $G$ and $\siggp(SO)$ have the
same derived length, then
$G$ must have derived length at least $n+1$.
However, Theorem~\ref{main}
shows that if $G$ is soluble,
then its derived length must be at
most $n$.  Hence the ``not soluble'' output
of Step~2.2 is valid.

To show that Step~3.3 is valid,
we consider the subgroup 
$H:=\langle g_1,...,g_q \rangle$ 
of $\siggp(SO)$, with the single orbital $(a,b)$.
Note that if one
of the groups $\Pi_{M_{aY}}$ and 
$\Pi_{M_{bY}}$ of Step~3.3 is not 
a discrete group, then that group is
neither
the trivial group nor isomorphic to $\Zbb$.
Lemmas~3.10 and~3.11 of~\cite{bpasc}
show that in this case the group $H$ is not
balanced.
Corollary~\ref{cor:chainlessIsBalanced} then
shows that $H$ is not soluble.  Thus
the group $\siggp(SO)$ contains a
nonsoluble subgroup, and so also is
nonsoluble.  Therefore $G$ is not
soluble, as required.

Suppose next that the conditions of
Step~3.6 hold, namely that
the slope $m_{cb}=bc_-'$ of $c$ in a
neighborhood to the left of $b$
satisfies $m_{cb} \neq \Pi_{M_{bY},s}$,
where $\Pi_{M_{bY},s}$ is the least 
number greater than 1
in the discrete group $\Pi_{M_{bY}}$.  
Since the algorithm
did not terminate at Step~3.5,
the element $c$ of $\siggp(SO)$
cannot have a fixed point in 
the interval $(a,b)$, and since
its slope $ac_+'$
on the right at $a$ is greater than 1,
we must have $bc_-' > \Pi_{M_{bY},s}$.
Let $d$ be the element
of $\siggp(SO)$ defined by
$d:=g_1^{\tilde p_1} \cdots g_q^{\tilde p_q}$.
Then the support of $d$ lies in $(a,b)$,
and the slope $bd_-'$ of $d$ from
the left in a neighborhood of the
endpoint $b$ is the number $\Pi_{M_{bY},s}$.
If $d$ has a fixed point in the
interval $(a,b)$, then the group
$\siggp(SO)$ has a \bado.
On the other hand, if $d$ does not have a
fixed point in $(a,b)$, then the 
slope $ad_+'$ of $d$ from the right at $a$
must be greater than 1, and so is 
greater than or equal to the slope
$\Pi_{M_{aY},s}=ac_+'$ of $c$ at $a$.
Now the element $cd^{-1}$ of
$\siggp(SO)$ has an orbital of
the form $(a',b)$ for some $a<a'<b$,
and so $\siggp(So)$ again admits a
\bado.  
Theorem~\ref{main} says
that $\siggp(SO)$ is not soluble in both cases,
and so $G$ also is not soluble.
This verifies the output of Step~3.6.

Next suppose that the condition of
Step~3.7 holds; that is, suppose that
there is an index $i$ such that
$l_{ia} \neq l_{ib}$.
Let $d:=g_ic^{-l_{ia}}$.  Then $d \in \siggp(SO)$,
the support of $d$ is a subset of $(a,b)$, 
and $d$ fixes an open neighborhood of $a$.
However, the slope $bd_-'$ of $d$ at
$b$ from the left is not 1.  Therefore
$\siggp(SO)$ again admits a \bado, and 
so $G$ is not soluble, so Step~3.7 is
also valid.

The last output step left to check is
the only step that outputs that the
group $G$ is soluble, namely Step~1.1.
Suppose that the set $\unseen$ is
found to be empty in an occurrence of
Step~1.1.  If $\counter = 0$, and
so the algorithm terminates at the first 
occurrence of Step~1.1, then
$G$ is the trivial group,
and the algorithm correctly 
outputs the value $0$ for the derived length.
On the other hand, suppose that $\counter > 0$, and so this
the algorithm terminates at a later occurrence
of Step~1.1.
From Steps~0.1 and~3.19,
we know that the unsigned orbitals in the
set $\seen$ satisfy properties
$\seen$.1, $\seen$.2, and $\seen$.3.
Thus the set $Z :=\{h \mid (A,h) \in SO\}$
is a finite generating set
of $\siggp(SO)$ that satisfies properties
Z0-Z3 of 
the definition of a set of \nice.
Lemma~\ref{lem:towertree} says that
the group $\siggp(SO)$ is soluble, 
with derived length equal to the height
of the largest tower that can be formed
from orbitals in the set $SO$.
The algorithm adds unsigned orbitals in
order of containment (larger intervals before smaller),
and so the value of $\maxDepth$ from the
last instance of Step~2.2 will be the derived
length of $\siggp(SO)$.
Again using the fact that
$G$ and $\siggp(SO)$ have the
same derived length, this shows
that the output of Step~1.1 is valid.


Finally we turn to the 
proof that the algorithm will terminate
on all possible inputs.
In Step~0.2 of the algorithm, the
set $SO$ is built from the one-bump
factors of the finite set
$\{f_1,...,f_m\}$ of input functions,
and the finite set $\unseen$ 
of associated unsigned orbitals is
created.  The only step in which
the set $\unseen$ is altered
is Step~3; in particular, 
Steps~3.9,~3.16, and~3.18.
Each time Step~3 is performed,
an orbital of smallest value of $\orbd$
is removed from $\unseen$, as well
as possibly some others of greater
orbital depth, and 
a finite (possibly zero) number
of orbitals of strictly
larger depth are added to $\unseen$.
After a finite number of iterations
of Step~3, then, the least value
of $\orbd$ of an element of $\unseen$
must increase or else $\unseen$ must
become empty.  In the case
that the group $G=\langle f_1,...,f_m \rangle$ 
is soluble this implies that
$\unseen$ must eventually
be empty after a finite number
of occurrences of Step~3, causing
the algorithm to terminate at Step~1.1.
In the case that $G$ is not soluble,
this means that either the algorithm
must halt in one of 
Steps~1.2,~3.3,~3.6,~3.7,~3.11,~3.13,~3.14, or~3.17,
or else after a finite number of steps
the smallest value of $\orbd$
among the elements of $\Top \subseteq \unseen$
is greater than the number $n$ of
breakpoints of the $f_i$ input functions,
causing the algorithm to terminate at
Step~2.2.
\end{proof}

\begin{remark} {\em
In the proof of Theorem~\ref{thm:algorithm},
every element of $S(G)$ involved in the
computations throughout the
procedure can be shown to be the one-bump 
factor of an element of $G$, using Lemma~\ref{lem:nonsplitrep}.
If the algorithm stores the element of $G$
with each of these one-bump factors, then
in each application of Processes~1-9, 
it is possible for the algorithm
instead to perform the procedure with
the corresponding elements of $G$, 
and then apply Process~\ref{proc:split},
in order to accomplish the process for
the element of $S(G)$.  At some potential cost in
efficiency, then, Theorem~\ref{thm:algorithm}
also holds for groups $C$ admitting
Processes~1-9 in which the
group $S(C)$ is replaced by $C$ in each of the
process statements.
}\end{remark}

We note that it may be possible 
to make this algorithm  more efficient;
in particular,
some repeated steps may be streamlined.
It is of interest  to consider whether
a different strategy for choosing
the element of $\Top$ to consider in
the next occurrence of Step~3.1, for
example with a depth-first-search instead,
would improve efficiency.
We also note that the algorithm can
be made parallel in various ways,
for example by processing all elements
of least $\orbd$ value in $\Top$ 
simultaneously, while the sets 
$SO$, $\unseen$, and $\seen$  and the value $\maxDepth$
are treated as global objects in shared memory.

Finally, we turn to the solution of the
membership decision problem for finitely
generated soluble subgroups of computable
subgroups of $\ploi$ that are generated
by a finite set of \nice.

\begin{corollary}\label{thm:membership}
Let $C$ be a 
computable subgroup of $\ploi$.
Let $H$ be a subgroup of $C$
generated by a finite set of \nice.
Then the membership decision problem 
is solvable for $H$; that is,
there is an algorithm which, upon input of
an element $w$ of $C$, can determine
whether $w \in H$.
\end{corollary}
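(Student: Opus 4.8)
The plan is to convert the algebraic decomposition of $H$ provided by Lemma~\ref{lem:towertree} into a recursive membership test, recursing on the maximal height of a tower built from the signed orbitals of the defining set $Z$ of \nice. Write $Y$ for the top-level generators (those whose orbital $A_h$ is maximal), and $P_h:=\{h'\in Z\mid A_{h'}\subsetneq A_h\}$; by Lemma~\ref{lem:towertree} the orbitals $\{A_h\mid h\in Y\}$ are pairwise disjoint, $H=\oplus_{h\in Y}(\langle P_h\rangle\wr\langle h\rangle)$, and each $P_h$ again satisfies (Z0)--(Z3) with strictly smaller maximal tower height. Given an input $w\in C$, I would first compute $\supp{w}$ (Process~\ref{proc:tuple}) and check $\supp{w}\subseteq\bigcup_{h\in Y}A_h$ (Process~\ref{proc:bpcompare}); if this fails then $w\notin H$. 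Otherwise each orbital of $w$ lies in a single $A_h$, so $w=\prod_{h\in Y}w_h$, where $w_h$ is the product of the one-bump factors of $w$ supported in $A_h$, and $w\in H$ if and only if $w_h\in\langle P_h\rangle\wr\langle h\rangle$ for every $h\in Y$.

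The main obstacle is that $w_h$, and more generally the restrictions of $w$ to subintervals that the recursion must examine, need not lie in $C$. The way around this is the device already used in the proof of Theorem~\ref{thm:algorithm}: each such restriction is a product of one-bump factors of an element of $C$, hence lies in the split group $S(C)$, and every Process is available for $S(C)$. Thus I would form $w_h\in S(C)$ from the one-bump factors of $w$ via Processes~\ref{proc:tuple} and~\ref{proc:split}, and carry out the entire recursion inside $S(C)$.

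To test $w_h\in\langle P_h\rangle\wr\langle h\rangle$ I would exploit that $h$ realises the orbital $A_h=(a_h,b_h)$, so its right-slope $\lambda:=a_hh_+'$ at $a_h$ differs from $1$. Since any element of the wreath product has the form $h^k p$ with $p$ in the base group $\oplus_{j}\langle P_h\rangle^{h^j}$, and each such $p$ has compact support in $A_h$ (only finitely many summands are nontrivial), the element $h^k p$ agrees with $h^k$ near both endpoints of $A_h$. So I would read off $\mu:=a_h(w_h)_+'$ (Process~\ref{proc:endptslope}) and search for the unique $k$ with $\lambda^k=\mu$ by iterating and comparing (Processes~\ref{proc:gpops} and~\ref{proc:slopecompare}); the cyclic group $\langle\lambda\rangle$ is automatically discrete, so this search either finds $k$ or brackets $\mu$ strictly between consecutive powers, in which case $w_h\notin H$. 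Setting $w':=w_hh^{-k}$ (Process~\ref{proc:gpops}), I would then check that $\supp{w'}$ is compactly contained in $A_h$ (Processes~\ref{proc:tuple} and~\ref{proc:bpcompare}); failure of this compact-support test---equivalently, an inconsistency at the right endpoint---again certifies $w_h\notin H$.

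Once $w'$ has compact support in $A_h$, property (Z3) gives a fundamental domain $(r_h,r_h\cdot h)$ containing $\supp{\langle P_h\rangle}$, and its translates $(r_h\cdot h^{j},r_h\cdot h^{j+1})$ tile $A_h$. Only finitely many tiles meet $\supp{w'}$, and they are located by evaluating the boundary points $r_h\cdot h^{j}$ (Process~\ref{proc:bpeval}) and comparing (Process~\ref{proc:bpcompare}). Membership of $w'$ in the base group requires $w'$ to fix every tile boundary it approaches (otherwise $w'$ moves a point across a boundary and $w_h\notin H$); granting this, the $j$-th piece conjugated back, $p_j:=(w'|_{\text{tile }j})^{h^{-j}}\in S(C)$, is formed by splitting and conjugation (Processes~\ref{proc:split} and~\ref{proc:gpops}) and is supported in the fundamental domain. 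Then $w_h\in\langle P_h\rangle\wr\langle h\rangle$ precisely when every $p_j$ lies in $\langle P_h\rangle$, which I would decide by recursion; the recursion bottoms out when $P_h=\emptyset$, where membership in the trivial group is just the test $\supp{p_j}=\emptyset$ (Process~\ref{proc:tuple}). Termination is clear: the recursion strictly decreases the maximal tower height at each level (Lemma~\ref{lem:towertree}), the number of top-level orbitals and of occupied tiles is finite, and each slope search halts by discreteness of $\langle\lambda\rangle$; correctness follows step by step from Lemma~\ref{lem:supports}, the wreath-product description of $H$, and the fundamental-domain property (Z3).
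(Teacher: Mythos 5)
Your proposal is correct in substance but takes a genuinely different route from the paper. The paper first proves $S(H)=H$ by induction on tower height, and then decides membership by feeding $Z\cup\{w\}$ into the SSRP algorithm of Theorem~\ref{thm:algorithm} and monitoring whether the data set $SO$ ever deviates from $S_Z$ (or the derived length exceeds that of $H$); the answer is ``yes'' exactly when the run terminates at Step~1.1 with $SO=\seen=S_Z$. You instead extract a bespoke recursive test directly from the decomposition $H=\oplus_{h\in Y}(\langle P_h\rangle\wr\langle h\rangle)$ of Lemma~\ref{lem:towertree}: peel off the $\Zbb$-coordinate by comparing the germ of $w_h$ at $a_h$ with powers of $a_hh'_+$, verify compact support of the remainder, decompose it over the $h$-translates of the fundamental domain, and recurse into $\langle P_h\rangle$. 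Your version is more self-contained (it needs only Lemma~\ref{lem:towertree} and the Processes, not the SSRP machinery) and in effect computes a normal form $w=\prod_{h}h^{k_h}\prod_j p_{h,j}^{h^j}$, which is more information than a yes/no answer; the paper's version buys economy by reusing an algorithm whose correctness has already been established.

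Two points in your sketch need to be made explicit, though neither is fatal. First, your recursion manipulates elements ($w_h$, $w'=w_hh^{-k}$, the conjugated tile pieces $p_j$) lying in $S(C)$ rather than $C$, and the Processes are guaranteed only for $S(C)$, not for $S(S(C))$; so you must check that every one-bump factor you split off at every level of the recursion is itself a one-bump factor of an element of $C$. This is true --- the factors of $w'$ are factors of $wh^{-k}\in C$ since the two maps agree on $A_h$, and the conjugates $p_j$ are handled by Lemma~\ref{lem:nonsplitrep}(2) --- and this is exactly the role Lemma~\ref{lem:nonsplitrep} plays in the paper's argument, so you should invoke it; it also means your induction must be stated for inputs that are products of one-bump factors of elements of $C$, not merely for $w\in C$. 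Second, property (Z3) only asserts that $r_h$ exists, whereas the algorithm must compute one; the choice $r_h:=\min\{\inf A_{h'}\mid h'\in P_h\}$ works (any (Z3)-witness lies weakly to its left, so $(r_h,r_h\cdot h)$ still contains every $A_{h'}$), and this point is an orbital endpoint, so Processes~\ref{proc:bpeval} and~\ref{proc:bpcompare} apply to it.
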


\begin{proof}
Let $Z$ be the finite set of \nice\ generating $H$.
For each $h \in Z$, we replace $h$ by
$h^{-1}$, if necessary, so that we
may assume that the slope $m_{ha}=ah'_+$
of $h$ at the left endpoint $a=\inf\supp{h}$
of its support satisfies $m_{ha}>1$.

We first show that $H$ is equal to the split group $S(H)$.
Following the notation of the proof of
Lemma~\ref{lem:towertree}, let $n$ be
the largest height of a tower in the set
$S_Z=\{(A_h,h) \mid h \in Z\}$ 
(where $A_h=\supp{h}$) of signed
orbitals associated to the elements of $Z$.
If $n=0$, then $Z$ is empty and $S(H)=H=1$
is the trivial group.  
If $n=1$ then the elements of $Z$ have
disjoint support, $H$ is the free abelian
group generated by the elements of $Z$,
and again $S(H)=H$.
Now suppose that $n>1$ and the result holds for
finite sets of \nice\ with maximum associated
tower height at most $n-1$.
Suppose that $g'$ is
any one-bump factor of an element $g \in H$. 
Recall from the proof of
Lemma~\ref{lem:towertree} that
$H = \oplus_{h \in Y} (\langle P_h \rangle \wr \Zbb)$
where $Y=\{h' \in Z \mid \orbd(A_{h'})=1\}$
is the set of elements of minimal orbital
depth in $Z$, and such that for each
$h \in Y$ the set
$P_{h}:=\{h' \in H \mid A_{h'} \subsetneq A_h\}$
is the set of elements of $Z$ whose support is
properly contained in the support $A_h$ of $h$.
Since the support of the group $H$ is
$\cup_{h \in Y} A_h$,
we have $\supp{g'} \subseteq A_h$ for some $h \in Y$.
Now the element $g \in \langle Z \rangle$ 
is a product of
an element $\tilde g$ of 
$\langle h,P_h\rangle=\langle P_h \rangle \wr \langle h \rangle$
with an element of 
$\langle Z \setminus (\{h\} \cup P_h) \rangle$ whose
support does not intersect $A_h$.
Moreover, $\tilde g$ is another element of $H$
that has $g'$ as a one-bump factor.
We can write $\tilde g = \hat g h^k$ for some 
$\hat g \in \oplus_{j \in \Zbb} \langle P_h \rangle^{h^j}$ 
and $k \in \Zbb$.
Moreover, $\hat g$ can be written
as a product of elements of
a finite
subset $Q$ of $\cup_{j \in \Zbb} (P_h)^{h^j}$ 
that is a set of \nice\ with maximum associated
tower height at most $n-1$.  
If $k=0$ then 
$g'$ is a one-bump factor of
$\tilde g=\hat g \in \langle Q \rangle$, and so
$g'$ is an element of the split group
$S(\langle Q \rangle)$.  
By the inductive assumption above, 
$S(\langle Q \rangle)=\langle Q \rangle$;
hence in the $k=0$ case,
$g' \in \langle Q \rangle < H$.
On the other hand, if 
$k \neq 0$, then since the
supports of the elements
in $\cup_{j \in \Zbb} (P_h)^{h^j}$ do not
share an endpoint of $A_h$, the support
of $\tilde g$ includes intervals 
with endpoints that are the endpoints of $A_h$.
Since $H$ is soluble (Lemma~\ref{lem:towertree}),
Theorems~\ref{main} and~\ref{thm:splitderlength} 
show that $S(H)$
does not admit a \bado, and so we have
$\supp{\tilde g}=A_h$.   In this case
$\tilde g$ is already a one-bump function, and so
$g'=\tilde g$.  Thus again we have
$g' \in H$.  Hence $S(H)=H$, as claimed.

Next we note that upon input of the
set $Z$ to the  algorithm 
of Theorem~\ref{thm:algorithm},
no orbitals are added or removed from the
set $SO$ after Step 0.2, and
the algorithm will terminate at an instance
of Step 1.1, with $SO=\seen=S_Z$,
and output the derived length $n$ of $H$.

Finally we are ready to give the
MDP algorithm.
Input the set $Z \cup \{w\}$ to 
the SSRP algorithm of Theorem~\ref{thm:algorithm}.  
At step 0.2, the algorithm will
place the signed orbitals of the one-bump factors of $w$
into the set $SO$; since $S(H)=H$,
these factors lie in $H$ iff $w$ lies in $H$.
Proceeding through the algorithm,
if at any time the SSRP algorithm outputs
``The group $G$ is not soluble'' or
``The group $G$ is soluble with derived
length $m$'' where $m$ is greater than
the derived length $n$ of $H$,
then the present (MDP) algorithm
outputs ``The element $w$ is not in $H$''.
For the rest of this proof we assume 
that the eventual output of 
the SSRP algorithm (with input
$Z \cup \{w\}$) is ``The group $G$ is soluble
with derived length $n$''.

The MDP algorithm uses a slight restriction
on Step~3 of the SSRP procedure, to ensure
that no signed orbital associated
to an element of $Z$ is removed from
the set $SO$.
Each time that the SSRP 
algorithm reaches Step~3.1,
since $Z \subseteq SO$,
the breadth-first-search structure of the
SSRP algorithm, processing intervals of least
orbital depth first, guarantees that
for all $h' \in Z$ satisfying $A_{h'} \supsetneq (a,b)$,
Step~3 has already been performed for the interval $A_{h'}$.
Also condition Z2 for the set $Z$
implies that the set $Y$ of elements
of $SO$ with support $(a,b)$ contains at most
one element of $Z$.  
Suppose first that $Y$ does not contain any element
of $Z$;
that is, all elements $w'$ of $Y$
are derived from $w$ via earlier Steps~0.2,~3.9, and~3.16
of the SSRP.
Then the group $\langle Z \cup \{w\} \rangle$
contains a subgroup $\langle Y \rangle$ 
not in $\langle Z \rangle$, and so
the MDP algorithm halts and outputs 
``The element $w$ is not in $H$''.
Next suppose that $Y=\{h\}$ is 
a singleton set whose element $h$ is in
$Z$.  Then the only substep
of Step~3 that has an effect
is Step~3.19, moving the orbital
$A_h=(a,b)$ from $\unseen$ to $\seen$; then
the SSRP algorithm returns to Step~1. 
Finally suppose that $Y$ contains an element $h$
of $Z$ and $|Y|>1$.
If the slope $m_{ha}$ of
the element $h$ at $a$ does not equal the slope 
$\Pi_{M_{aY}}$ in the subsequent
occurrence of Step~3.4,
then the group of slopes at $a$ of signed orbitals
with support $(a,b)$
for $\langle Z \cup \{w\} \rangle$
does not equal the same slope group
for $\langle Z \rangle$, and so
we stop and
output ``The element $w$ is not in $H$''.
Otherwise, we can take $c=h$ in 
this round of Step~3.4.
Continuing with Step~3,
in Steps~3.9 and~3.10
the orbital associated to each $w' \in Y \setminus \{h\}$
in $SO$ is replaced by signed orbitals of one-bump 
factors of a product of $w'$ with
a power of $h$, and in Steps~3.16 and~3.18
these orbitals may be replaced again
by orbitals associated to conjugation
of the signatures by a power of $h$.
Again using 
$S(H)=H$, we have 
$w' \in H$ iff these (conjugates of)
factors lie in $H$.
Again in Step~3.19 the orbital
$A_h=(a,b)$ is moved from $\unseen$ to $\seen$,
and then the SSRP algorithm returns to Step~1.

Continue through the SSRP procedure
and repeat the above process for all
instances of Step~3.
When the SSRP algorithm terminates,
the MDP
algorithm outputs ``The element $w$ is not in $H$''
unless the
SSRP algorithm terminates at an
instance of Step~1.1
with $SO=\seen=S_Z$, in 
which case the output is
``The element $w$ is in $H$''.
\end{proof}

\section*{Acknowledgment}

The third author was partially supported by grants from the Simons
Foundation (\#245625) and the National Science Foundation (DMS-1313559).


\bibliographystyle{amsplain}
\bibliography{go}

\end{document}